\newtheorem{lm}{Lemma}[section]
\newtheorem{theorem}[lm]{Theorem}
\theoremstyle{definition}
\newtheorem{defn}[lm]{Definition}
\newtheorem{ex}[lm]{Example}
\newcommand{\mdef}{\refstepcounter{lm}\medskip\noindent{\bf (\thelm) } }
\newcommand{\Z}{\operatorname{\mathbb{Z}}\nolimits}
\newcommand{\ignore}[1]{}
\newcommand{\mat}[1]{\left( \begin{array}{#1} }
\newcommand{\tam}{\end{array} \right) }
\newcommand{\N}{{\mathbb N}}
\newcommand{\ppm}[1]{\textcolor{green}{#1}}
\newcommand{\kb}[1]{\textcolor{orange}{#1}}
\newcommand{\kub}[1]{\textcolor{blue}{#1}}
\newcommand{\beq}{\begin{equation}}
\newcommand{\eq}{\end{equation}}
\theoremstyle{definition}
\newcommand{\tre}{triangular equivalence}
\newcommand{\mlem}[1]{\begin{lm} #1 \end{lm}}
\newcommand{\footnot}[1]{}  
\newcommand{\PP}{\mathscr P}
\title
{A generalised Euler--Poincar\'e formula for associahedra.
}
\author{Karin Baur}
\address{Department of Mathematics and scientific computing, 
University of Graz, Nawi Graz, 8010 Graz, Austria}
\author{Paul P. Martin}
\address{Department of Pure Mathematics, University of Leeds, 
Leeds LS2 9JT, UK}
\newcommand{\monthword}[1]{\ifcase#1\or January\or February\or March\or April\or May\or 
June\or July\or August\or September\or October\or November\or December\fi}
\date{\monthword{\the\month} \the\day, \the\year } 
\begin{document}

\maketitle

\newcommand{\EP}{Euler--Poincar\'e}

\begin{abstract}
We derive a formula for the number of
flip-equivalence classes of
tilings
of an $n$-gon
by collections of tiles 
of shape  dictated by an integer partition $\lambda$. 
%
  The proof uses the \EP\ formula; and the formula itself
  generalises the \EP\ formula for associahedra.
\end{abstract}

\medskip


\section{Introduction} \label{ss:intro}

In \cite{FominZel-I-02, FominZel-II03,FominZel03} 
Fomin and Zelevinsky introduce cluster algebras and in particular
find a
Lie theoretic
manifestation 
of the associahedron or Stasheff polytope \cite{Stasheff63}. 
In \cite{Chapoton2004} Chapoton investigates the combinatorics of the 
associahedron from the Lie theoretic perspective; while in \cite{Postnikov}, 
\cite{Scott}, Postnikov and Scott develop these constructions in 
the direction of total positivity and 
Grassmannians.
(The associahedron also
has a wider interconnected network of other uses.
See for example
\cite{Tonks97}, 
\cite{GKZ08}, \cite{Brown09}, \cite{Postnikov09}.) 
Baur et al.~\cite{BaurKingMarsh} develop Fomin-Zelevinsky's and Scott's 
original construction with 
emphasis on quiver mutation and cluster categories associated with the Grassmannian. 
In \cite{BaurMartin15}
we  
generalised Scott's map (\cite[Section 3]{Scott}) 
to the whole associahedron. 
A partial result on counting
tilings with prescribed tile sizes 
up to `flip equivalence' (cf. \cite{Hatcher91})
is included in \cite{BaurMartin15}.
Based on this, Baur-Schiffler-Weyman conjectured a general formula
\cite{BSW}.
In this 
paper, we
note that the formula also generalises the Euler--Poincare formula for the
associahedron, and 
prove the conjecture. 
The proof uses the polytopal property of the associahedron and 
the Euler-Poincar\'e formula
itself
among other devices. 



\begin{figure}
\includegraphics[width=1.4in]{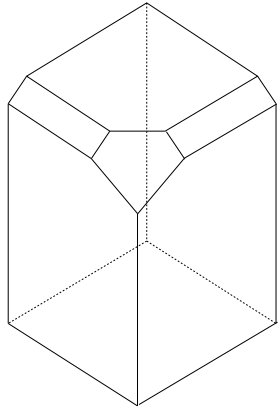}  
\caption{\label{fig:K5}
Associahedron $K_5$ drawn as a manifest convex polytope.}
\end{figure}

\newcommand{\betam}[1]{m_{#1}(\mu)}
\newcommand{\Pip}[2]{\Pi^{#1}_{#2}}
\newcommand{\coup}{\cup}
\newcommand{\triangula}{triangular}  
\newcommand{\type}{shape}
\newcommand{\shape}{type/shape}
\newcommand{\weight}{weight}
\newcommand{\fillup}[1]{\overline{#1}}

\newcommand{\sh}{{\mathsf s}} 
\newcommand{\ff}{{\mathsf f}}  

\newcommand{\OF}{O\!F} 

\newcommand{\len}[1]{l(#1)}  
\newcommand{\modl}[1]{\stackrel{\ \len{#1}}{\models}} 
\newcommand{\modll}[1]{{\models}^{\!\!\!\!#1}} 



\medskip

Let $P$ be a convex polygon with $n$ vertices, labeled $1,2,...,n$.
Here $A_n$ denotes the set of tilings of such an $n$-gon,
or equivalently the set of non-crossing subsets of diagonals $[i,j]$ of $P$. 
For example
\[
A_4 = \{ 
\raisebox{-.071in}{\includegraphics[height=.21in]{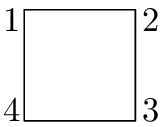}},
\raisebox{-.071in}{\includegraphics[height=.2in]{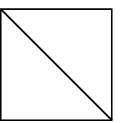}},
\raisebox{-.071in}{\includegraphics[height=.2in]{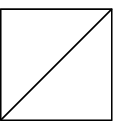}}
\}
= \{ \emptyset, \{ [1,3] \}, \{[2,4] \}\}
\]
There are two ways of triangulating a 4-gon. The move between these
two is called `flip'.
Two tilings are
{\em \triangula\ equivalent}, or {\em flip equivalent},  if 
they are related by any sequence of flips (for example any two
triangulations are equivalent \cite{Hatcher91}).
The set of classes of tilings under equivalence is $\AE_n$. 
These are enumerated in \cite{BaurMartin15}. 


It is convenient to arrange $A_n$  into a polytope, called the
associahedron \cite{Lee89}.
This is {\it ab initio} an abstract cell complex:
each tiling $t$ is a cell, and the boundary set of a cell is the set of
tilings obtained from $t$ by adding one more diagonal.
Thus in particular the vertices (0-cells)
are the tilings that are triangulations.
Removing a diagonal from such a tiling gives a tiling with one
quadrilateral tile.
Starting now from this tiling, 
the quadrilateral tile can be triangulated in two ways.
The tiling is an edge (1-cell) of the complex and
its boundary is the two completions obtained by triangulating the
quadrilateral. And so on.


\begin{theorem}[Haiman, Lee \cite{Lee89}; see e.g. {\cite[Theorem 3.39]{DevORourke}}]
  \label{th:KisP}
  There exists a convex $n$-dimensional polytope $K_{n+1}$ 
  called the
  associahedron whose vertices and edges form the flip graph of a
  convex $(n+3)$-sided polygon. The $k$-dimensional faces ($k$-cells) of this
  polytope are in one-to-one correspondence with the diagonalizations (tilings)
  of the polygon using exactly $n-k$ diagonals. 
\end{theorem}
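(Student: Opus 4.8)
The plan is to establish the three claims of the theorem—that $K_{n+1}$ exists as a convex polytope, that its vertex-edge graph is the flip graph, and that its $k$-faces correspond to tilings with $n-k$ diagonals—by exhibiting an explicit geometric realisation and reading off the face lattice.

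First I would recall that a triangulation of a convex $(n+3)$-gon uses exactly $n$ diagonals and cuts the polygon into $n+1$ triangles; more generally, a tiling (diagonalisation) using exactly $j$ non-crossing diagonals is a coarsening obtained by deleting $n-j$ diagonals from some triangulation. The key structural observation is that the set of all tilings, partially ordered by refinement (reverse inclusion of diagonal sets), forms a graded lattice: the empty tiling (the polygon itself) is the unique maximum, the triangulations are the minimal elements, and adding one admissible diagonal to a tiling covers it. I would then argue that this poset is precisely the face lattice one expects of an $n$-dimensional polytope, with the triangulations as vertices and the tilings using a single diagonal as facets, so that a tiling with $n-k$ diagonals sits at rank $k$ and plays the role of a $k$-cell; the boundary relation described in the excerpt (a cell's boundary consists of the tilings obtained by inserting one further diagonal) matches the covering relations of this lattice.

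The substantive step is to produce an \emph{honest convex polytope} whose face lattice is this abstract complex, rather than merely a cell complex. Here I would invoke one of the standard constructions: either the secondary polytope of the point configuration given by the vertices of the $(n+3)$-gon (Gelfand--Kapranov--Zelevinsky), whose vertices are exactly the regular triangulations—and all triangulations of a convex polygon are regular—or Loday's realisation via binary trees and the Tamari lattice, or the fibre-polytope description. Any of these gives an $n$-dimensional convex polytope $K_{n+1}$ in which the vertices biject with triangulations and, crucially, the full face lattice biject with coarser tilings, with dimension of a face equal to the number of deleted diagonals. I would check that under such a realisation the $1$-skeleton reproduces the flip graph, since two triangulations are joined by an edge exactly when they differ by a single flip, i.e. when they share a common coarsening with one quadrilateral tile.

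The main obstacle is the passage from the abstract cell complex to a genuine convex polytope: verifying that the proposed realisation (say the secondary polytope) has the claimed face lattice requires knowing that every triangulation of a convex polygon is regular and that the faces of the secondary polytope correspond to polyhedral subdivisions, which for a convex polygon are exactly the non-crossing tilings. Once that correspondence is in hand, the grading by number of diagonals and the identification of the $1$-skeleton with the flip graph follow by a direct count. Since the theorem is attributed to Haiman and Lee and is quoted from \cite{DevORourke}, I would lean on the established secondary-polytope machinery for the convexity and face-lattice claim, and devote the original argument to checking that the combinatorial data (tilings, diagonals, flips) matches the polytopal data (faces, dimensions, edges) exactly as stated.
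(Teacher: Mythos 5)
The paper offers no proof of this statement: Theorem~\ref{th:KisP} is imported as a known result of Haiman and Lee, cited from \cite{Lee89} and \cite[Theorem 3.39]{DevORourke}, and is used later purely as a black box (together with Theorem~\ref{th:SP}) to conclude $F_r=1$ in Lemma~\ref{lm:EP-formula}. So your proposal cannot be compared with ``the paper's proof''; what it sketches is the standard literature argument, and as a sketch it is sound. The combinatorial bookkeeping is right: a triangulation of a convex $(n+3)$-gon has $n$ diagonals and $n+1$ triangles, a tiling with $n-k$ diagonals sits at rank $k$ in the refinement order, and two triangulations are adjacent in the flip graph exactly when they share a common coarsening with a single quadrilateral tile. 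The substantive step --- realising this abstract complex as the face lattice of an honest convex $n$-dimensional polytope --- you correctly identify as the crux, and the secondary-polytope construction of Gelfand--Kapranov--Zelevinsky \cite{GKZ08} (equivalently, Lee's or Loday's explicit realisations) does deliver it, via the two facts that for a planar point configuration in convex position every polyhedral subdivision is regular, and that the faces of the secondary polytope biject with regular subdivisions ordered by refinement, with face dimension equal to the number of diagonals removed from a triangulation. Be aware, though, that those two facts constitute essentially the entire content of the theorem, so your write-up is a correct reduction to cited machinery rather than a self-contained proof --- which is exactly the status the statement already has in this paper.
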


See e.g. Figure~\ref{fig:K5}. 

The numbers $a_n = | A_n |$ are the little Schr\"oder numbers 
(see e.g. \cite{ps2000}).
Also of interest are the $f$-vectors \cite{DevORourke} of these polytopes \cite{Chapoton2004},
see Sloane's OEIS number A001003 \cite{SloaneOEIS} and cf.~(\ref{eq:triangles-case}).

\medskip

\mdef 
Let $\PP_n$ be the set of integer partitions of $n$, 
and $\PP$ be the set of all integer partitions.
We use partition notation as in Macdonald \cite[\S I.1]{Macdonald15},
as follows. 
For 
$\lambda\in\PP$ let $m_i(\lambda)$ denote the number of parts $i$.
We write $\lambda \in \PP_n$   
as $\lambda=(\lambda_1,\lambda_2,\dots,\lambda_r)$ where the $\lambda_i$ are 
non-negative integers in non-increasing order;
or in {\em exponential} form,
$\lambda=(1^{m_1},2^{m_2},\dots,r^{m_r},\dots)$.
We write $\lambda\coup\mu$ for the { combination of
  partitions}, thus $(3,2)\coup (4,2,1) =(4,3,2,2,1)$.
We write 
$l(\lambda)$ for the number of non-zero parts, the {\em length}; 
$|\lambda| = \sum_i\lambda_i =n$, the {\em \weight\ of  $\lambda$};
and we may write  $\lambda \in \PP_n$ as $\lambda \vdash n $. 

\bigskip 

\mdef \label{de:sh}
For tiling $t \in A_n$ let $m_i(t)$ denote 
the number of $(i+2)$-gonal tiles.
Define a `\type' function $\sh: A_n \rightarrow \PP$ by
$\sh(t) =  (1^{m_1(t)},2^{m_2(t)},\dots)$.
Let $A_n(\lambda) \subset A_n$ denote the subset of tilings of
\type\ $\lambda$. Thus:
\[
A_n = \bigcup_{\lambda \in \PP_{n-2}} A_n(\lambda)
\] 
The first few of the numbers $a_n(\lambda) = | A_n(\lambda) |$ are
computed in \cite{BaurMartin15}. 


Note that \type\ is  well-defined on \tre\ classes. 
Let $\AE_n(\lambda)$ denote the set, and 
$\ae_n(\lambda)$ denote the number of tilings of $P$ of 
a given \type\ up to triangular equivalence.

\newcommand{\hue}{hue} 


For $\mu \in \PP$ define $\mu^+ \in \PP$ by 
\beq \label{eq:mup}
\mu^+=(2^{m_1(\mu)},3^{m_2(\mu)}, \dots, (i+1)^{m_i(\mu)},\dots)
\eq

We extend the combination $\coup$ of partitions as follows. 
Let $a>0$, $\lambda\in\PP$. Define $\lambda\coup (1^{-a})$ as 
$(1^{m_1(\lambda)-a},2^{m_2(\lambda)}, 3^{m_3(\lambda)},\dots)$. 
For $m_1(\lambda)\ge a$, $\lambda\coup (1^{-a})$ is a partition. 
For example, $(1^3,2^2,3)\coup (1^{-2})=(1,2^2,3)$. 
We will consider tilings of \type\ $\lambda\coup (1^{-a})$, for some 
$\lambda$.
If $m_1(\lambda)<a$, $\lambda\coup (1^{-a})$ is not a partition,
and $A_n(\lambda\coup (1^{-a}))=\emptyset$.

\begin{theorem}\label{thm:general2}
For 
$\lambda,  
\mu\in\PP$, 
let
\[
\Pip{\lambda}{\mu} = \prod_{s\ge 1}{m_{s+1}(\lambda) + \betam{s} \choose \betam{s}} 
 = \prod_{s\ge 2}{m_{s}(\lambda\coup\mu^+) \choose m_{s}(\mu^+)}
 \]
Then 
\begin{equation}\label{eq:general}
\ae_n(\lambda)
 = \sum_{m=0}^{m_1(\lambda)-1}(-1)^m\sum_{\mu\,\vdash m} \,
       \Pip{\lambda}{\mu} \,
        a_n(\lambda\coup \mu^+\coup 1^{-|\mu^+|}) 
\end{equation}
\end{theorem}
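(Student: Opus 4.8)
The plan is to realise each flip-equivalence class of type-$\lambda$ tilings as the face poset of a product of associahedra, and then to extract $\ae_n(\lambda)$ by summing the Euler--Poincar\'e identity $\chi=1$ over all classes and regrouping the result by tiling.

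First I would characterise the classes. Since a flip swaps the diagonal of a quadrilateral formed by two adjacent triangular tiles, it leaves every non-triangular tile untouched; hence the \emph{skeleton} of $t$---the partial tiling consisting of the $\geq\!4$-gon tiles of $t$---is a flip invariant. Conversely, fixing a skeleton $S$ cuts $P$ into free regions $R_1,\dots,R_p$ (the components of $P\setminus S$), and every type-$\lambda$ tiling with skeleton $S$ is obtained by triangulating each $R_j$. By Hatcher the triangulations of one polygon are flip-connected, and flips in distinct regions are independent, so all such tilings lie in a single class. Thus the classes of type-$\lambda$ tilings correspond bijectively to admissible skeletons, and each class is exactly the vertex set of the product polytope $\Pi_C=\prod_j K(R_j)$, where $K(R_j)$ is the associahedron of $R_j$ (Theorem~\ref{th:KisP}).

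Next I would run Euler--Poincar\'e on each $\Pi_C$. A product of associahedra is a polytope, hence a ball, so $\chi(\Pi_C)=1$; therefore $\ae_n(\lambda)=\sum_C 1=\sum_C\chi(\Pi_C)=\sum_C\sum_{\sigma}(-1)^{\dim\sigma}$, the inner sum running over the cells $\sigma$ of $\Pi_C$. These cells are precisely the tilings refining $S$, i.e.\ the coarsenings obtained by erasing triangle--triangle diagonals inside the regions; if the merged tiles are recorded by $\mu^+$ for some $\mu\in\PP$, then $\sigma$ has type $\lambda\coup\mu^+\coup 1^{-|\mu^+|}$, and a short count of erased diagonals gives $\dim\sigma=|\mu|$, so the sign is $(-1)^{|\mu|}$, matching (\ref{eq:general}).

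Finally I would regroup the double sum by the tiling $\sigma$ rather than by the class. The content here is a multiplicity count: a fixed tiling $\sigma$ of type $\lambda\coup\mu^+\coup 1^{-|\mu^+|}$ is a cell of $\Pi_C$ exactly for those skeletons obtained by designating, among the $m_s(\sigma)$ tiles with tile-size index $s\geq 2$, which $m_s(\lambda)$ are frozen and which $m_s(\mu^+)$ are merged regions; this yields $\prod_{s\geq 2}\binom{m_s(\lambda\coup\mu^+)}{m_s(\mu^+)}=\Pip{\lambda}{\mu}$ classes, independently of $\sigma$. Summing over the $a_n(\lambda\coup\mu^+\coup 1^{-|\mu^+|})$ tilings of each type and over $\mu$ then produces (\ref{eq:general}); the range $m\le m_1(\lambda)-1$ appears because $|\mu^+|=|\mu|+l(\mu)>m_1(\lambda)$ forces $1^{-|\mu^+|}$ to delete more triangles than $\lambda$ contains, whence $a_n=0$. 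I expect the main obstacle to be this last bijection $(C,\sigma)\leftrightarrow(\sigma,\text{designation})$---checking that every designation of frozen versus merged tiles gives a genuine, distinct class $C$ with $\sigma\in\Pi_C$---together with the clean identification of the face poset of $\prod_j K(R_j)$ with the refinements of the skeleton.
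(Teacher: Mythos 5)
Your proof is correct, but it is organised quite differently from the paper's, so a comparison is worth recording. The paper argues from the right-hand side of (\ref{eq:general}): it fibres everything over $\nu=\ff(t)$, normalises by the (uniform) class size to define overcount factors $\OF_{\mu,\nu}$, reduces the theorem to the column-sum identity (\ref{eq:column-sum}), and proves that identity by computing $\OF_{\mu,\nu}$ as a sum over multipartitions (Lemma \ref{lm:overcount}), factorising the signed column sum as $\prod_i F_{\nu_i}$ (Lemma \ref{lm:OF-is-prod-F}), and evaluating $F_r=1$ via the Euler--Poincar\'e formula (Lemma \ref{lm:EP-formula}). You argue from the left-hand side: each flip class $C$ is the vertex set of a product of associahedra $\Pi_C=\prod_j K(R_j)$, so $\ae_n(\lambda)=\sum_C\chi(\Pi_C)$ by Theorem \ref{th:SP} applied to each product, and regrouping the resulting signed cell count by cells $\sigma$ rather than by classes yields (\ref{eq:general}), the multiplicity $\Pip{\lambda}{\mu}$ arising from your frozen/merged designations. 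The two proofs are dual to one another: your identity $\chi(\Pi_C)=1$ for a class of region type $\nu$ is precisely the column-sum identity (\ref{eq:column-sum}) for that $\nu$ (the cells of $\Pi_C$ of merged type $\mu$ are exactly what $\OF_{\mu,\nu}$ counts per class), and the paper's factorisation $\prod_i F_{\nu_i}$ is the multiplicativity of $\chi$ over the factors of $\Pi_C$, which you get for free because a product of polytopes is again a polytope. What your route buys is economy: the multipartition bookkeeping of Lemmas \ref{lem:rw1}, \ref{lem:rw2} and \ref{lm:OF-is-prod-F} is absorbed into the face structure of $\Pi_C$, and no quotient by $|\AE_{\lambda,\nu}|$ (hence no uniformity-of-overcount discussion) is needed, since you count classes directly rather than tilings per class; what the paper's route buys is an explicit overcount calculus with the $\OF_{\mu,\nu}$ table, of some independent interest. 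The step you rightly flag as the crux, the bijection $(C,\sigma)\leftrightarrow(\sigma,\text{designation})$, does hold: the frozen tiles of any designation form a partial tiling whose complementary regions triangulate to type-$\lambda$ tilings constituting a single flip class with exactly that skeleton, distinct designations give distinct skeletons hence distinct classes, and every pair arises this way; it is the geometric counterpart of the paper's marked-tiling interpretation of $\Pip{\lambda}{\mu}\,a_n(\lambda\meld\mu)$ in (\ref{pa:cake}).
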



%
%


Before proving the theorem we focus on one class of special cases:
the cases where $\lambda$ is of form $1^{n-2}$.
In these cases we know $\ae_n(\lambda) = 1$ by Hatcher's theorem (a) in
\cite{Hatcher91}.  
On the other hand the right hand side also takes a relatively simple
form in these cases. 
%
All coefficients $\Pip{(1^{n-2})}{\mu}$ on the right hand 
side are $1$ since $m_s(1^{n-2})=0$ for all $s>1$. The right hand side is thus 
\begin{equation}\label{eq:triangles-case}
\sum_{m=0}^{n-3}(-1)^m\sum_{\mu\,\vdash m} \,
               a_n(1^{n-2}\coup \mu^+\coup 1^{-|\mu^+|}) 
\end{equation}
Noting that $\sum_{\mu\,\vdash m} \, a_n(1^{n-2}\coup \mu^+\coup 1^{-|\mu^+|})$ 
is the number of $m$-cells of $K_{n-1}$, 
since the signs alternate we actually have the 
Euler--Poincar\'e formula.
Thus this special case of the theorem asserts:
\[
1 = \mbox{Euler-Poincar\'e characteristic of $K_{n-1}$}
\]

Consider the Schl\"afli--Poincar\'e Theorem:
\begin{theorem}[Schl\"afli, Poincar\'e; see e.g.\ \cite{ZieglerBl07}] \label{th:SP}
  Let $f_i$ denote the number of $i$-cells of the $d$-dimensional polytope $P$.
Then the Euler-Poincar\'e formula states that
\[
\sum_{i=0}^d (-1)^i f_i =1  . 
\]
\end{theorem}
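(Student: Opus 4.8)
The plan is to recognise the alternating sum $\sum_{i=0}^d(-1)^i f_i$ as the (combinatorial) Euler characteristic of the closed polytope $P$, viewed as a regular cell complex whose cells are the relatively open faces in all dimensions $0,1,\dots,d$, so that the term $f_d=1$ accounts for the interior of $P$. Conceptually the result is then transparent: $P$ is homeomorphic to the closed $d$-ball, which is contractible, so its Euler characteristic is $1$. The actual content is to match the combinatorial alternating face count with this topological value and to pin down the normalisation. Rather than quote topological invariance of $\chi$, I would give a self-contained combinatorial proof by induction on $d$, driven by the shellability of the boundary complex.

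First I would reduce to the boundary. Let $\partial P$ denote the complex of \emph{proper} faces (dimensions $0,\dots,d-1$). Using $f_d=1$ we have
\[
\sum_{i=0}^d(-1)^i f_i \;=\; \Big(\sum_{i=0}^{d-1}(-1)^i f_i\Big) + (-1)^d ,
\]
so it suffices to show that the alternating face count of $\partial P$ equals $1+(-1)^{d-1}$, i.e.\ the Euler characteristic of the boundary sphere $S^{d-1}$. I would prove this by induction on $d$ (base case: a single point), \emph{simultaneously} with the auxiliary statement that every shellable ball occurring as an initial segment of a shelling of a boundary sphere has alternating face count $1$.

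The inductive engine is a \emph{shelling}. By the Bruggesser--Mani line-shelling construction (see \cite{ZieglerBl07}) the facets of $P$ can be ordered $F_1,\dots,F_s$ so that, writing $U_j=F_1\cup\cdots\cup F_j$, each intersection $F_j\cap U_{j-1}$ ($j\ge 2$) is an initial segment of a shelling of $\partial F_j$: a shellable $(d-2)$-ball for $j<s$ and all of $\partial F_j\cong S^{d-2}$ for $j=s$. The alternating face count $\chi$ satisfies the inclusion--exclusion identity $\chi(A\cup B)=\chi(A)+\chi(B)-\chi(A\cap B)$, which is immediate from counting faces. Each facet $F_j$ is a $(d-1)$-polytope, so the inductive hypothesis in dimension $d-1$ gives $\chi(F_j)=1$, $\chi(\partial F_j)=1+(-1)^{d-2}$, and (by the auxiliary statement in dimension $d-1$) $\chi(F_j\cap U_{j-1})=1$ for $j<s$. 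Hence $\chi(U_1)=1$ and, for $2\le j\le s-1$,
\[
\chi(U_j)=\chi(U_{j-1})+\chi(F_j)-\chi(F_j\cap U_{j-1})=\chi(U_{j-1})+1-1=\chi(U_{j-1}),
\]
so $\chi(U_{s-1})=1$, proving the auxiliary ball statement. At the closing step the intersection is the whole sphere $\partial F_s$, and inclusion--exclusion gives $\chi(\partial P)=1+1-(1+(-1)^{d-2})=1+(-1)^{d-1}=\chi(S^{d-1})$. Adding back the top cell then yields $\sum_{i=0}^d(-1)^i f_i=1+(-1)^{d-1}+(-1)^d=1$, completing the induction.

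The main obstacle is the combinatorial-topology input rather than the arithmetic. One must (i) produce the shelling, which is exactly where the \emph{polytopality} of $P$ (not merely an abstract cell structure) is used, via the Bruggesser--Mani visibility order along a generic line; and (ii) justify that every initial segment of a shelling of a sphere is a ball whose own boundary is again shellable, so that the inductive hypothesis genuinely applies to $F_j\cap U_{j-1}$. Granting (i) and (ii) — for which I would cite \cite{ZieglerBl07} — the remaining argument is the telescoping inclusion--exclusion displayed above.
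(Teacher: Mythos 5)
The paper does not actually prove Theorem~\ref{th:SP}: it quotes it as a classical result of Schl\"afli and Poincar\'e, with a pointer to \cite{ZieglerBl07}, and then uses it as a black box (via Lemma~\ref{lm:EP-formula}) in the proof of Theorem~\ref{thm:general2}. So there is no internal proof to compare against; what you have written is, in effect, a reconstruction of the standard proof in the very reference the paper cites, and it is correct. Your reduction to $\chi(\partial P)=1+(-1)^{d-1}$, the double induction coupling the formula for $(d-1)$-polytopes with the auxiliary statement that proper initial shelling segments of a boundary sphere have alternating face count $1$, and the telescoping inclusion--exclusion are exactly the Bruggesser--Mani/Ziegler argument. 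One step you use implicitly and should either make explicit or fold into the citation: for $2\le j\le s-1$ the attachment $F_j\cap U_{j-1}$ must be a \emph{proper} initial segment of a shelling of $\partial F_j$ --- if some intermediate facet were attached along its entire boundary, your telescoping would break, since then $\chi(F_j\cap U_{j-1})=1+(-1)^{d-2}\neq 1$. This is ruled out because $\partial P$ is a pseudomanifold (every ridge lies in exactly two facets): a full attachment at step $j<s$ would make $U_j$ a closed pseudomanifold, and the next attachment $F_{j+1}\cap U_j$, being pure of dimension $d-2$, would put some ridge in three facets of $\partial P$. For line shellings this dichotomy (proper segments throughout, full boundary only at the final facet) is part of what \cite{ZieglerBl07} establishes, so your decision to cite rather than prove the shellability input is the right call --- that input, not the arithmetic, is where the polytopality of $P$ is genuinely used, just as you say.
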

%

\noindent
By Theorem 1.1, $K_{n-1}$ is a 
$n-3$-dimensional polytope, so we can apply Theorem~\ref{th:SP}. We thus 
see 
on the one hand that  Theorem~\ref{thm:general2} is verified in the case
$\lambda = 1^{n-2}$;
and on the other hand 
that the special cases of the theorem
coincide with 
the Euler-Poincar\'e formulae for the various associahedra.
In other words, the `count' of triangulations of $P$ up to flip
equivalence computes the \EP\ characteristic of the corresponding
associahedron.

\section{Proof of main Theorem}
\subsection{Constructions and notation for proof: integer partitions} $ \; $ 

\mdef \label{de:sea}
(To take arms against a sea of triangles \cite{Shakespeare}.)
Observe that the map $\mu \mapsto \mu^+$ from (\ref{eq:mup}) corresponds to adding a
leading column (of length $l(\mu)$) to the Young diagram of $\mu$:
\[
\includegraphics[width=.5in]{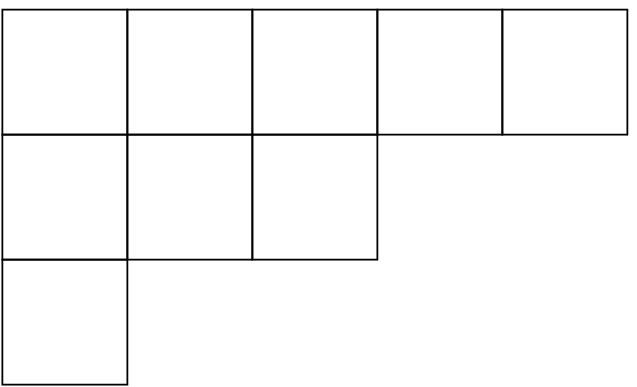} \mapsto \;\;
\includegraphics[width=.598in]{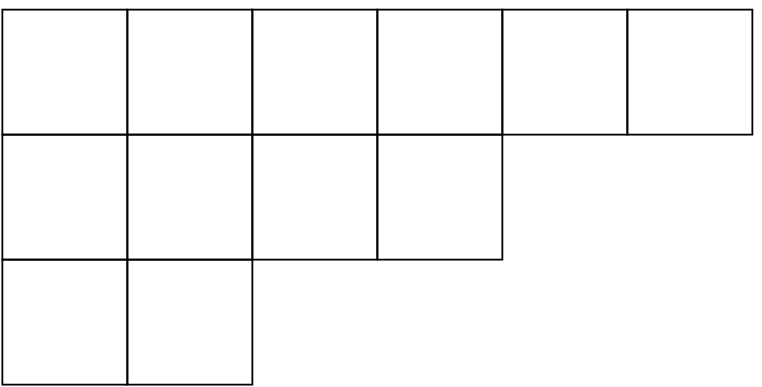} 
\]
(NB $\emptyset^+ = \emptyset$).
Assuming $\mu \neq \emptyset$  
we also define  $\mu \mapsto \mu^-$ by {\em removing } the first
column:
\beq \label{eq:mum}
\includegraphics[width=.3135in]{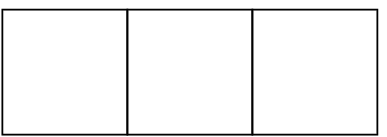} \mapsto \;
\includegraphics[width=.2078in]{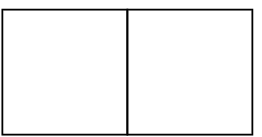} 
\hspace{.75in}
\includegraphics[width=.2006135in]{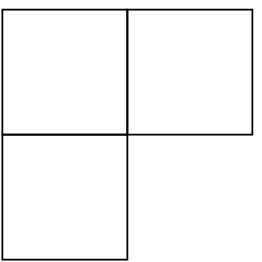} \mapsto \;
\includegraphics[width=.100912078in]{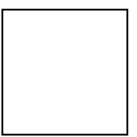} 
\hspace{.75in}
\includegraphics[width=.100912078in]{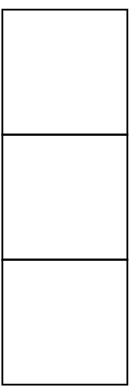} 
\mapsto \emptyset
\eq
Note $\mu \mapsto \mu^+ \mapsto (\mu^+)^- = \mu$, but 
$\mu \mapsto \mu^- \mapsto (\mu^-)^+ $  
is not $\mu$ unless $m_1(\mu)= 0$.

At the level of {\em tilings} $t$ of \type\ $\sh(t) = \mu$
(as in  (\ref{de:sh})), $\mu \mapsto \mu^-$
`forgets' all triangles. It nominally replaces every other $i$-gon with
an $(i-1)$-gon. Thus $(\mu^-)^+$ simply forgets all triangles and
preserves other gons.

Note that if $\mu \in \PP_n$ then $\mu^-$ lies in some $\PP_{n-m}$
(specifically in $\PP_{n-l(\mu)}$).
Indeed $\mu \mapsto \mu^-$ defines an injective map
\beq \label{eq:PiP}
\PP_n \hookrightarrow  \cup_{m=0}^{n-1} \PP_{m}
                      =\cup_{m=1}^{n} \PP_{n-m}
\eq
(See (\ref{eq:mum}) for the examples in $\PP_3$.)


At the level of {\em $i$-cells} $t$ of associahedron $K_{n-1}$
then cell dimension
$$
i = | \sh(t)^- | .
$$
Note that this verifies the \EP\ formula assertion 
immediately 
after (\ref{eq:triangles-case}). 



\mdef \label{rem:A-l-m}
We define another 
function from tilings to partitions
(cf. (\ref{de:sh})). 
A tiling $t$ gives an integer partition $\ff(t)$ as follows. 
First consider the partition 
$\ff^+(t)$ of the number of its triangles 
into the unordered list of sizes of maximal triangulated regions. Then 
$\ff(t) = \ff^+(t)^-$.
Again this is well-defined on classes. \\
Examples: 
$$
\ff^+\left( 
\raisebox{-.121in}{
\includegraphics[width=1cm]{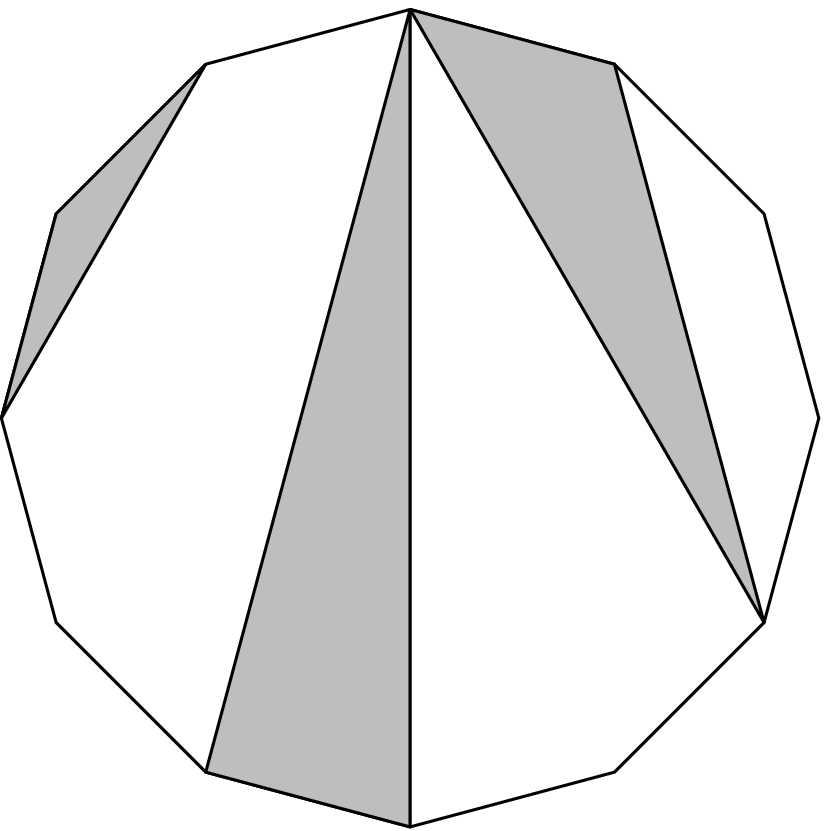}}
\right) = 
\ff^+\left( 
\raisebox{-.121in}{
\includegraphics[width=1cm]{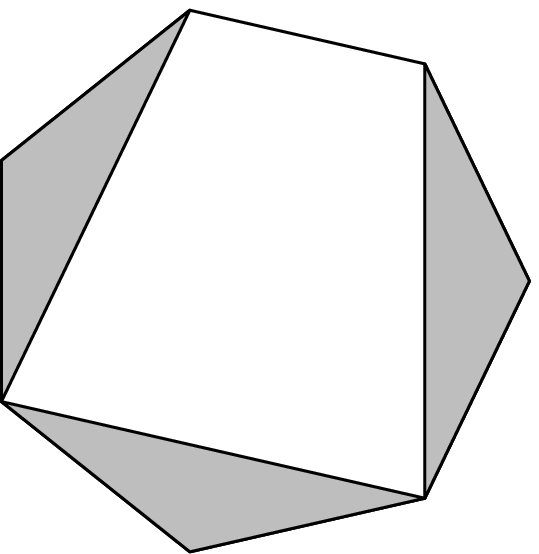}}
\right) = 
\ff^+(\includegraphics[width=1cm]{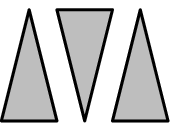})=(1,1,1)
%
; \hspace{1cm} 
\ff(\includegraphics[width=1cm]{figs/111.eps})=(0,0,0)=\emptyset .
$$ 
$$
\ff^+\left( 
\raisebox{-.121in}{
\includegraphics[width=1cm]{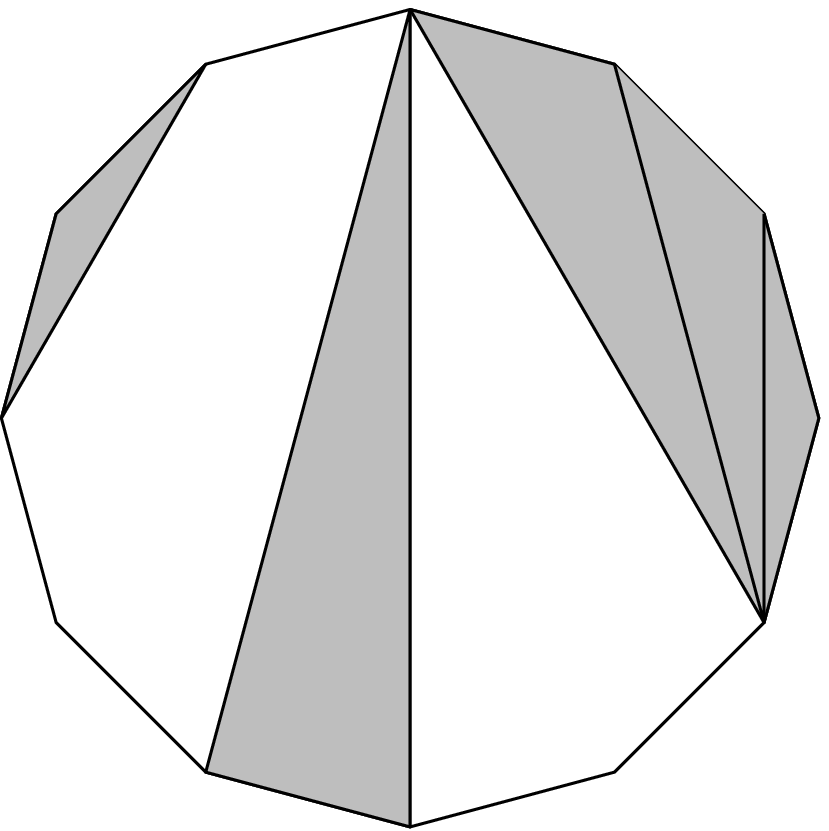}}
\right) = (3,1,1);
\hspace{1in}
\ff\left( 
\raisebox{-.121in}{
\includegraphics[width=1cm]{figs/12g113.eps}}
\right) = (2,0,0) = (2)
$$ 
\ignore{{
and 
$f^+(\includegraphics[width=.3cm]{pentagon.eps}\triangle)=(3,1)$ and 
$
\ff(\includegraphics[width=.3cm]{pentagon.eps}\triangle)=(2,0)=(2) .
$ 
}}
%

%
\newcommand{\num}{c}
%
%
A further decomposition of $A_n(\lambda)$ is into subsets
characterised as the inverse images of $\ff$.
Thus, for $\nu$ in $\PP$:
\[
A_{\lambda,\nu} = \ff^{-1}(\nu) \cap A_n(\lambda)
\]
and we have $\AE_{\lambda,\nu}$ similarly. We write 
$a_{\lambda,\nu}$ and $\ae_{\lambda,\nu}$ for the sizes of these sets. 

\mdef \label{rem:c-lambda}
The utility of this decomposition for us lies in the fact that the
`overcount factor' in counting $A_n$ instead of $\AE_n$ 
is uniform 
in $A_{\lambda,\nu}$ --- since every tiling in this subset
belongs to a class of size 
\beq \label{eq:overco}
\num^\nu  = \prod_i C_{\nu_i}
\eq
where $C_n$ is the $n$-th Catalan number: $C_1 = 1$, $C_2 = 2$ and so
on.
That is
\[
\ae_{\lambda,\nu} = \frac{a_{\lambda,\nu}}{\num^\nu }
\]


\ignore{{

\medskip

\kb{why is that paragraph here as it is? can it just go?}
In \cite{BaurMartin15} several expressions were given relating 
$\ae_n(\lambda)$ to
the $a_n(\lambda)$s in certain cases.
Subsequently BSW observed that all these cases fit as specialisations of
the general formula given in 
the Theorem~\ref{thm:general2};
and conjectured that the general formula holds. 
BSW's observation was merely by inspection.
However the proof throws light on the form of the formula in (at
least) two ways.
Firstly it has a natural direct combinatorial interpretation 
which informs the first part of the proof.
Secondly it connects to the combinatorics of associahedra \cite{what?}, which
property informs the second part of the proof.
For now, though, we will simply state the Theorem as conjectured. 

}}

\ignore{{

\begin{theorem}\label{thm:general}
Let
$\lambda=(...,r^{m_r},...)$ with $|\lambda|=n-2$. 
Let 
\[
\Pip{\lambda}{\mu} = \prod_{s\ge 1}{m_{s+1}(\lambda) + \betam{s} \choose \betam{s}} 
=\prod_{s\ge 2}{m_{s}(\lambda\cup\mu^+) \choose m_{s}(\mu^+)}
\]
Then 
\begin{equation}\label{eq:general}
\ae_n(\lambda)
 = \sum_{m=0}^{m_1(\lambda)-1}(-1)^m\sum_{\mu\,\vdash m} \,
       \Pip{\lambda}{\mu} \,
        a_n(\lambda\cup \mu^+\cup 1^{-|\mu^+|}) 
\end{equation}
where 
$\mu^+=(2^{m_1(\mu)},\dots, (i+1)^{m_i(\mu)},\dots)$. 
\end{theorem}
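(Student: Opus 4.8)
The plan is to express $\ae_n(\lambda)$ via the decomposition of $A_n(\lambda)$ into the fibres $A_{\lambda,\nu}$ of the function $\ff$, and then to reorganise the resulting sum so that it matches the associahedral Euler--Poincar\'e data. By (\ref{rem:c-lambda}) we have $\ae_{\lambda,\nu} = a_{\lambda,\nu}/\num^\nu$, so summing over $\nu$ gives $\ae_n(\lambda) = \sum_\nu a_{\lambda,\nu}/\num^\nu$. The guiding idea is that counting flip-equivalence classes of type $\lambda$ amounts, on each maximal triangulated region, to counting triangulations of a polygon up to flip, which Hatcher's theorem tells us is a single class; the Catalan overcount $\num^\nu = \prod_i C_{\nu_i}$ records exactly how many triangulations collapse to that one class. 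I would therefore first establish the overcount formula (\ref{eq:overco}) carefully, verifying that every tiling in $A_{\lambda,\nu}$ lies in a flip class of size $\prod_i C_{\nu_i}$, because flips are confined within a maximal triangulated region and the regions are independent.

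\medskip

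Next I would pass from the $\num^\nu$-weighted count to an alternating (signed) count. The key algebraic input is an inclusion--exclusion identity replacing the division by $\num^\nu$ with the alternating sum appearing in (\ref{eq:general}). Concretely, I would interpret the right-hand side of (\ref{eq:general}) combinatorially: the term indexed by $\mu \vdash m$ counts, with multiplicity $\Pip{\lambda}{\mu}$, the tilings of type $\lambda \coup \mu^+ \coup 1^{-|\mu^+|}$, i.e.\ tilings obtained from type-$\lambda$ tilings by merging some triangles into larger triangulated gons according to $\mu^+$. The binomial coefficient $\Pip{\lambda}{\mu} = \prod_{s\ge 2}\binom{m_s(\lambda\coup\mu^+)}{m_s(\mu^+)}$ counts the ways to select which of the relevant gons in $\lambda \coup \mu^+$ arise from the merging prescribed by $\mu^+$; this is precisely the bookkeeping that lets us reindex the signed sum by the partition $\mu$ of the number of triangles removed rather than by the fibre data $\nu$.

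\medskip

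The connection to the Euler--Poincar\'e formula comes through the observation recorded after (\ref{eq:triangles-case}) and in (\ref{rem:A-l-m}), namely that for a tiling $t$ the cell dimension in the relevant associahedron is $|\sh(t)^-|$, and that the forgetting map $\mu \mapsto \mu^-$ on triangulated regions (via $\ff^+(t)$ and $\ff(t) = \ff^+(t)^-$) governs exactly how triangles contribute to cell structure. For fixed $\lambda$ the type-$\lambda$ tilings of $P$, together with the tilings obtained by merging triangles, assemble into (a disjoint union of) associahedra: each maximal family of triangulated regions carries a product of associahedra, and the alternating sum over $m$ with the signs $(-1)^m$ computes the product of Euler--Poincar\'e characteristics, each equal to $1$ by Theorem~\ref{th:SP}. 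I would thus reduce (\ref{eq:general}) to a statement that a product of Euler--Poincar\'e characteristics equals $1$, and hence that the signed sum recovers $\ae_n(\lambda) = \sum_\nu a_{\lambda,\nu}/\num^\nu$.

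\medskip

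The main obstacle I anticipate is the second step: matching the division by $\num^\nu = \prod_i C_{\nu_i}$ to the alternating sum with the precise coefficients $\Pip{\lambda}{\mu}$. The Catalan numbers enter as $f$-vector data of associahedra, so the identity I need is that the reciprocal Catalan overcount on each triangulated region is produced by the alternating cell-count of the associahedron on that region; getting the multiplicities $\Pip{\lambda}{\mu}$ to come out correctly when several regions of the same size interact — so that the combinatorial choices are counted once and the signs align across the product decomposition — is where the bookkeeping is most delicate. I would handle this by first proving the single-region case (one maximal triangulated gon, where the statement is exactly the \EP\ formula for one associahedron), and then extending multiplicatively over regions, using the product structure to justify that $\Pip{\lambda}{\mu}$ is the correct binomial selection factor.
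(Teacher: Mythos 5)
Your proposal follows essentially the same route as the paper: decompose $A_n(\lambda)$ by the fibres of $\ff$ over maximal triangulated regions, use the uniform Catalan overcount $\num^\nu=\prod_i C_{\nu_i}$, interpret each term of the right-hand side as counting marked/merged tilings (the paper's sets $B^\lambda_\mu$ in (\ref{pa:cake})), and reduce fibre-wise to a signed sum that factorises as a product over regions of associahedron Euler--Poincar\'e characteristics, each equal to $1$ by Theorems~\ref{th:KisP} and \ref{th:SP}. The ``delicate bookkeeping'' you correctly flag as the main obstacle is exactly what the paper carries out in Lemmas~\ref{lm:overcount}, \ref{lem:rw1}, \ref{lem:rw2} and \ref{lm:OF-is-prod-F}, including your proposed single-region-then-multiplicative strategy (the factors $F_{\nu_i}$ and Lemma~\ref{lm:EP-formula}).
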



}}



\newcommand{\meld}{\wr}

$\ $

In order to explain the proof it will be helpful to write out the
first few terms explicitly.
To this end let
\[
\lambda \meld \mu \; = \; \lambda \cup \mu^+ \cup 1^{-|\mu^+|}
\]
To obtain \type\ $ \lambda \meld \mu $ we start with  $\lambda$,
but move to a \type\ with fewer
triangular tiles,  replacing multiple triangles with larger tiles such that the
overall \weight\ is unchanged. 
Here we note that on the level of tilings, 
the operation $\coup(1^{-a})$ corresponds to  
removing $a$ triangles. 

Then:
\[
\ae_n(\lambda) 
 = a_n(\lambda) \Pip{\lambda}{\emptyset} 
   - a_n(\lambda \cup (2) \cup (1^{-2})) \Pip{\lambda}{(1)}
   + a_n(\lambda \cup (3) \cup (1^{-3}))  \Pip{\lambda}{(2)}
   - \ \ldots \hskip2cm
   \]
   becomes
\[
\ae_n(\lambda) 
 = a_n(\lambda) 
   - a_n(\lambda \meld (1)) \Pip{\lambda}{(1)}
   + a_n(\lambda \meld (2))  \Pip{\lambda}{(2)}
   - a_n(\lambda \meld (1^2)) \Pip{\lambda}{(1^2)}
   - a_n(\lambda \meld (3)) \Pip{\lambda}{(3)}
   + ...
\]
We interpret 
each of the terms 
as counting a certain subset of
$A_n(\lambda)$. 
The claim is that the multiplicity of 
the intersection of this subset with $\ff^{-1}(\nu)$ is, for each $\nu$, 
uniform in  $\AE_n(\lambda)$, in the sense of our discussion of
$A_{\lambda,\nu}$ in (\ref{rem:A-l-m}) above. 


\mdef \label{pa:cake}
The set counted by $\Pip{\lambda}{\mu} a_n(\lambda\meld\mu)$ is the subset 
of $A_n(\lambda)$ where the triangulated regions include regions
corresponding to triangulations of the tiles arising from $\mu$
--- specifically the construction is to count tilings of 
$\lambda\meld\mu$
but with the tiles with sizes given by $\mu^+$ marked
(hence the multiplicity $\Pip{\lambda}{\mu} $).
The marked regions can then simply be
triangulated.
In this way we have tilings of the right \type, but where there are
triangulated regions of size at least given by $\mu^+$
(they might also be bigger).
Let us write $B^\lambda_\mu$ for this subset of 
$A_n(\lambda)$. 
(Note that tilings here have only one
triangulation in each triangulated polygon from $\mu$
--- and indeed we do not actually specify one.)
That is
\[
b^\lambda_\mu := | B^\lambda_\mu | = \Pip{\lambda}{\mu} a_n(\lambda\meld\mu)
\]


\subsection{Tilings organised by maximal triangulated regions}

As discussed in (\ref{rem:c-lambda}), if we organise tilings by taking the fibre of $\ff$ 
over some partition $\nu\in\PP$, the overcount factor in counting 
$A_n$ instead of $\AE_n$ is uniform. So  we define the overcount factor 
\beq
\OF_{\mu,\nu} :=  \frac{ |\ff^{-1}(\nu)\cap B^\lambda_\mu | }{|\AE_{\lambda,\nu}|}
\eq
By definition, $\OF_{\nu,\nu}=1$. 
We will need the sizes of these factors, but the first thing to note
is that they do not depend on $\lambda$, since this dependence is
removed by the quotient. 
With this, fibre-wise the claim of Theorem~\ref{thm:general2} becomes
\[
|\AE_{\lambda,\nu}| 
\;\; \stackrel{\begin{small}\mbox{Thm.}\,\ref{thm:general2}\end{small}}{=\joinrel=} \;\;
\sum_{m=0}^{m_1(\lambda)-1}(-1)^m 
\sum_{\mu\vdash m} |\ff^{-1}(\nu)\cap B_{\mu}^{\lambda}|
\]
Dividing by $|\AE_{\lambda,\nu}|$ on both sides, this translates to 

\begin{equation}\label{eq:column-sum}
1  
\;\; \stackrel{\begin{small}\mbox{Thm.}\,\ref{thm:general2}\end{small}}{=\joinrel=} \;\;
\sum_{m=0}^{m_1(\lambda)-1}(-1)^m 
\sum_{\mu\vdash m} \OF_{\mu,\nu} 
\end{equation}

So we will have proved Theorem~\ref{thm:general2} if we can show that every
{\em signed} column sum as in (\ref{eq:column-sum}) equals 1. 


Let us start with a table illustrating the first few cases of the numbers 
$\OF_{\mu,\nu}$ 
--- see Figures~\ref{fig:OF-start},\ref{fig:OF-more}. 
We unpack what this table is saying in a few cases before 
treating the general case.

\ignore{{
\[
\begin{array}{c|cccccccccccc}
      {}_\mu \;\;\; \backslash  {}^\nu \!\!
  & 0 & 1 & 2 &  1^2     &  3       &   21  & 1^3 & 4 &
\\ \hline
a_n(\lambda)
&a_3(1)&a_4(1^2)&a_5(1^3)&a_4(1^2)^2  & a_6(1^4)    &a_5(1^3)a_4(1^2)&a_4(1^2)^3 &a_7(1^5)& \\
&  =1  &  =2   & = 5    &  =2^2      & =  14      &  =   5\times2  & =2^3 &=42&
\\ \hline
B^\lambda_{(1)}  
& 0 &a_4(2)&a_5(21)&  2a_4(1^2)a_4(2) & a_6(21^2) &a_5(1^3)a_4(2)\;\;\; & &a_7(21^3)&
\\
&   &      &      &            &          &\;\;\;+a_5(21)a_4(1^2)& &    &
\\
& 0 &  = 1 &  = 5 &  =2+ 2 &      =21 & =5\times1+5\times2& &  = 84&
\\ \hline
B^\lambda_{(2)}  
& 0 & 0 & a_5(3) &   0       & a_6(31) & a_5(3)a_4(1^2) & & a_7(31^2)&
\\
& 0 & 0 & =1 &   0       & =6   & =1\times2 & &  = 28&
\\ \hline
B^\lambda_{(1^2)}  
& 0 & 0 & 0 & a_4(2)^2      & a_6(2^2) & a_5(21)a_4(2)& & a_7(2^2 1)&
\\
& 0 & 0 & 0 &   =1       & =3  & =5\times1& &  = 28&
\\ \hline
B^\lambda_{(3)}
& 0 & 0 & 0 &  0       &  a_6(4)= 1       & 0 & 
\\
B^\lambda_{(21)}          & 0 & 0 & 0 &  0       &  0      & a_5(3)a_4(2)=1 &
\\
B^\lambda_{(1^3)}          & 0 & 0 & 0 &  0       &  0      & 0 &
\\
B^\lambda_{(4)}          & 0 & 0 & 0 &  0       &  0      & 0 &
\end{array}
\]
}}

\begin{figure}
\[\small{
\begin{array}{|c|c|c|c|c|c|c|c|}
\hline
      {}_\mu \backslash \! {}^\nu \!\!\!
  & \emptyset & (1) & (2) &  (1^2)     &  (3)       &   (21)  & (1^3)  
\\ \hline
\emptyset
&a_3(1)&a_4(1^2)&a_5(1^3)&a_4(1^2)^2  & a_6(1^4)    &a_5(1^3)a_4(1^2)&a_4(1^2)^3  
\\
&  =1  &  =2   & = 5    &  =2^2      & =  14      &  =   5\cdot2  & =2^3 
\\ 
        \specialrule{1pt}{1pt}{1pt}
(1)
&  &a_4(2)&a_5(21)&  a_4(1^2)a_4(2) & a_6(21^2) &a_5(1^3)a_4(2)\;\;\; & 
a_4(1^2)a_4(1^2)a_4(2)  
\\
&   &      &      &       +a_4(2)a_4(1^2)     &          &\;\;\;+a_5(21)a_4(1^2)&  
+ a_4(1^2)a_4(2)a_4(1^2)   
\\
&   &      &      &          &          &   
& + a_4(2)a_4(1^2)a_4(1^2)    \\
& 0 &  = 1 &  = 5 &  =2+ 2 &      =21 & =5\cdot 1+5\cdot 2& =3(2\cdot2\cdot 1)  
\\ 
        \specialrule{1pt}{1pt}{1pt}
(2)
&   &   & a_5(3) &           & a_6(31) & a_5(3)a_4(1^2) &   
\\
& 0 & 0 & =1 &   0       & =6   & =1\cdot2 & 0   
\\ \hline
(1^2)
&   &  &  & a_4(2)^2      & a_6(2^2) & a_5(21)a_4(2) &  a_4(1^2)a_4(2)a_4(2)
\\
&  &  &  &     &   &   &  +a_4(2)a_4(2)a_4(1^2) 
  
\\
&  &  &  &     &   &   &  +a_4(2)a_4(1^2) a_4(2)
 
\\
& 0 & 0 & 0 &   =1       & =3  & =5\cdot1& =3\cdot 2  
\\ 
        \specialrule{1pt}{1pt}{1pt}
(3)
& 0 & 0 & 0 &  0       &  a_6(4)= 1       & 0 & 0  
\\
\hline
(21)          & 0 & 0 & 0 &  0       &  0      & a_5(3)a_4(2)=1 & 0 
\\
        \specialrule{1pt}{1pt}{1pt}
(1^3)          & 0 & 0 & 0 &  0       &  0      & 0 & a_4(2)^3=1 
\\
\hline
(4)          & 0 & 0 & 0 &  0       &  0      & 0 & 0 \\
\hline
\end{array}
}\]
\caption{\label{fig:OF-start} Tabulating the $\OF_{\mu,\nu}$ function.}
\end{figure}

\begin{figure}
\[
\newcommand{\qin}{1.1in}
\begin{array}{|c|c|c|c|c|}
\hline 
      {}_\mu \backslash \! {}^\nu \!\!\!
  & (4) & (31) &(2^2)& (21^2)
\\ \hline
0
& a_7(1^5)& a_6(1^4)  a_4(1^2) \hspace{\qin} &a_5(1^3)a_5(1^3)& a_5(1^3)a_4(1^2) a_4(1^2)
\\
& =42    & 14\cdot 2 \hspace{\qin}         && 5\cdot 2\cdot 2
\\ 
        \specialrule{1pt}{1pt}{1pt}
(1)
& a_7(21^3)& a_6(21^2) a_4(1^2)   + a_6(1^4)a_4(2) & a_5(21)a_5(1^3)
        &a_5(21)a_4(1^2)a_4(1^2)
\\
& =84& 21\cdot 2 \quad + \quad 14\cdot 1 &+a_5(1^3)a_5(21)& +a_5(1^3)a_4(2)a_4(1^2)
\\
&    &                                   &&  +a_5(1^3)a_4(1^2)a_4(2)
\\ 
        \specialrule{1pt}{1pt}{1pt}
(2)
& a_7(31^2)& a_6(31) a_4(1^2)  \hspace{\qin} &a_5(3)a_5(1^3)& a_5(3)a_4(1^2)a_4(1^2)
\\
& =28& 6 \cdot 2 \hspace{\qin}              &+a_5(1^3)a_5(3)&
\\ \hline
(1^2)
& a_7(2^2 1)& a_6(2^2) a_4(1^2) + a_6(21^2) a_4(2) &a_5(21)a_5(21)& a_5(21)a_4(2)a_4(1^2)
\\
& =28 & 3 \cdot 2 \quad + \quad 21\cdot 1    && +a_5(21)a_4(1^2)a_4(2)
\\ 
        \specialrule{1pt}{1pt}{1pt}
(3)
& a_7(41)  & a_6(4) a_4(1^2)  \hspace{\qin}  &0& 0
\\
& =7   & =1\cdot 1  \hspace{\qin} &&
\\
\hline
(21)
& a_7(32) &   \hspace{\qin}    a_6(31) a_4(2)  &a_5(3)a_5(21)& a_5(3)a_4(2)a_4(1^2)
\\
& =7  &    \hspace{\qin} =6 \cdot 1          &+a_5(21)a_5(3)& +a_5(3)a_4(1^2)a_4(2)
\\
\hline
(1^3)
& 0 &   \hspace{\qin}    a_6(2^2) a_4(2)  &0& a_5(21)a_4(2)a_4(2)
\\
&   &    \hspace{\qin} =3 \cdot 1  &&
\\
        \specialrule{1pt}{1pt}{1pt}
(4)
& a_7(5)  & 0 & 0 &  0 
\\
& =1  &  & &
\\
\hline
(31)
& 0  &   \hspace{\qin}   \ a_6(4) a_4(2)  & 0 & 0 \\
&  &    \hspace{\qin} =1        & & 
\\
\hline
(2^2)  & 0 & 0 & 1 & 0
\\
\hline
(21^2) & 0 & 0 & 0 & 1
\\
\hline
\end{array}
\]
  \caption{\label{fig:OF-more} Continuing the $\OF_{\mu,\nu}$ table.
  }
\end{figure}


\subsection{Overcount factor columnwise: small $\nu$} 


$\ $ 

{\bf First column:}
The $\mu$-th entry in the first column is the overcount factor for the subset of
$B^\lambda_\mu$  
(any $\lambda$)
with no two triangular tiles together.
Zero means that in fact the intersection is empty.
In the first column we are looking only at tilings in which the
triangulation has no clusters of size bigger than 1.
Since the equivalence
classes are singletons in this case, the count is the same in
$\AE_n(\lambda)$:
\[
\OF_{0,0} = 1
\]
All of the remaining combinatorics count sets in which higher tiles
are triangulated, so there are larger clusters, and so the
intersection in the first column is empty.
This means that the theorem counts these classes correctly.

{\bf Second column:}
In the second column, $\nu=(1)$, we are intersecting with tilings having a
quadrilateral triangulated region and then any other triangles
singletons.
Such tilings are present in $A_n(\lambda)$
(the first row)
and, since the equivalence
classes are flip pairs in this case, the count is 2x that in 
$\AE_n(\lambda)$.
In the second row (the case of $B^\lambda_{(1)}$)
we assemble tilings with a marked quadrilateral (which
we consider to then be triangulated). However since we only count one tiling
here, we count for $\AE_n(\lambda)$ correctly.
(In this set there are tilings where the triangulated quadrilateral is adjacent
to other triangles, but these do not lie in the intersection in this
column.) 
All of the remaining combinatorics count sets in which higher tiles
are triangulated, so there are larger clusters, and so the
intersection in the second column, for $\mu$ all other $\mu$, is empty.
This means that the theorem again counts these classes correctly overall.

{\bf Column $\nu=(2)$:}
Here we have tilings with a pentagonal triangulated
region and any other triangles singletons. 
Thus the overcount in $A_n(\lambda)$ is $5\times$.
In $B^\lambda_{(1)}$ such pentagons arise when the triangulated quadrilateral 
is adjacent to one other triangle. Indeed note that a given
triangulated pentagonal subregion of the tiling could arise in 5
different ways in $B^\lambda_{(1)}$ --- we are counting the number of
ways of tiling a pentagon with a quadrilateral and a triangle,
which number is $a_5(21)$.
In $B^\lambda_{(2)}$ such pentagons arise when we triangulate a
pentagon, with no other adjacent triangles. The construction only
counts one such triangulation, so the count is the same as for the
class in $\AE_n(\lambda)$.
The remaining intersections in this column are empty.
So again the count in the Theorem is correct.

{\bf Column $\nu=(1^2)$:}
Here we have tilings with two triangulated quadrilaterals and any
other triangles singletons.
The overcount factor in $A_n(\lambda)$ is $2^2$.
In $B^\lambda_{(1)}$ tilings of the required clustering arise when the
triangulated quadrilateral is not adjacent to any triangle, and there are two
other triangles that are adjacent (possibly along with further
singletons).
There are two ways the quadrilateral could be asigned to one of the
triangulated quadrilaterals; and there are two triangulations of the other
triangulated quadrilateral, so we have a 2x2x overcount here. 
In $B^\lambda_{(2)}$ there is a triangulated pentagon, and hence no
intersection.
In $B^\lambda_{(1^2)}$ there are two marked quads. Each is given a
triangulation, so the count is the same as for classes.
There are no more non-empty intersections in this column. 
Again the theorem is verified.

{\bf Column $\nu=(3)$:}
Here we have tilings with one triangulated hexagon.
Such tilings are overcounted in $A_n(\lambda)$ by the appropriate
Catalan number, 14.
In $B^\lambda_{(1)}$ we have the triangulated hexagon formed by a quadrilateral and
two triangles. The number of ways of doing this is $a_6(21^2)$. 


\subsection{Overcount factor columnwise: general $\nu$} $\;$




\newcommand{\cuu}{\cup}
\newcommand{\PPP}{\PP^*}

Let $\PPP = \PP\setminus\{ \emptyset \}$.  
We write $\PP^+$ for the image of $\PP$ under $\mu \mapsto \mu^+$. 
That is $\PP^+ = \{ (0), (2), (3), (2^2), (4), (32), (2^3), ... \}$.
Recall that $\mu \mapsto \mu^-$ acts as the inverse of  
$\mu \mapsto \mu^+$ on $\PP^{*+}$.

\mdef \label{mdef:models} 
  For $s \ge 1$ consider the set
  of multipartitions 
$\PP^s =
  \{ \gamma = (\gamma^1, \gamma^2, ..., \gamma^{s}) \; | \; \gamma^i \in \PP \}$.
Define a map $\cuu_s : \PP^s \rightarrow \PP$ by
  $\gamma \mapsto \cup_{i=1}^s \gamma^i$.
  
  Since $\cuu_s$ is surjective, the fibres
$\{ \cuu_s^{-1}(\mu) \; | \; \mu\in\PP \}$ are a partition of $\PP^s$. 

Define
$
\modll{s} \mu \; = \; \cuu_s^{-1} (\mu^+).
$
For $\nu\in\PPP$ we write $\modll{\nu} \mu$ for
$\modl{\nu}\mu$.
Thus
\beq \label{eq:ppp}
\bigsqcup_{\mu\in \PP} \modll{\nu} \mu \; = \; \left( \PP^+ \right)^{\len{\nu}}
\eq
is a partition of
$\left( \PP^{\len{\nu}} \right)^+ 
 = \left( \PP^+ \right)^{\len{\nu}}  $.

We write $\gamma\modll{s} \mu$ for  $\gamma\in\modll{s} \mu$.
\ignore{{
  For $s \ge 1$
we write $\gamma\modll{s} \mu$  
to denote that 
$\gamma = (\gamma^1, \gamma^2, ..., \gamma^{s}) \in \PP^s$
such that $\cup_i \gamma^i =$ $\mu^+$.
%
We extend to $s=0$ by taking $\PP^0 = \{ \emptyset \}$. 
}}


\noindent
Examples:
$\modll{2} (21) = 
\{ ((32),\emptyset), ((3),(2)), ((2),(3)), (\emptyset,(32)) \}.
$
%
\newcommand{\nug}{(21^2)}
\newcommand{\nugg}{3} 
And
for $\nu = \nug$ the partitioning of
$\left( \PP^{\len{\nu}} \right)^+$
starts with 
\\
$\modll{\nugg} \kern .05em\emptyset = \{ (0,0,0) \}$; 
\hspace{.32in} 
$\modll{\nugg} (1)  = \{ ((2),0,0),(0,(2),0), (0,0,(2))\}$; 
\\
$\modll{\nugg} (2)  = \{ ((3),0,0),(0,(3),0), (0,0,(3))\}$; 
\\
$\modll{\nugg} (1^2)  = 
\{ ((2^2),0,0),(0,(2^2),0), (0,0,(2^2)), 
  ((2),(2),0),((2),0,(2)),(0,(2),(2)) \}$; 
etc.

\newcommand{\tricomp}{triangular completion}
\newcommand{\Tricomp}{Triangular completion}
\newcommand{\nup}{{\nu^+}}
\newcommand{\fillnu}[1]{\fillup{#1}^{\nup}}
\newcommand{\edible}{additive}

\newcommand{\rr}{|\gamma|} 

\begin{defn}
  For $m \in \N$ and $\gamma\in \PP$    
  define $ \fillup{\gamma}^m   \in\PP_{\geq m} = \cup_{r=m}^\infty \PP_r$ by
$$
\fillup{\gamma}^m \;  =
\left\{ \begin{array}{ll} \gamma\cup 1^{m-\rr} & \mbox{if } \rr \leq m \\ \gamma & \rr>m
\end{array} \right. .
$$
We understand $a_n(\fillup{\gamma}) = a_n(\fillup{\gamma}^{n-2})$
(completing the partial shape $\gamma$ with triangles).
\end{defn}





\begin{lm} \label{lm:overcount}
For arbitrary $\mu \in \PP$,  $\nu \in \PP^*$, 
the overcount function 
$\OF_{\mu,\nu}$ obeys
\beq \label{eq:OF}
\OF_{\mu,\nu} = 
\sum_{\gamma\stackrel{\ \len{\nu}}{\models} \mu} 
\prod_{i=1}^{\len{\nu}} a_{\nu_i +3}(\fillup{\gamma^i})
\eq
\end{lm} 

\begin{ex}
For a full example we consider $\mu=(21)$, $\nu=(31)$
(so $\len{\nu} =2$):  
\[
\OF_{(21),(31)} = a_6(\fillup{32}) a_4(\fillup{\emptyset}) + 
 a_6(\fillup{3}) a_4(\fillup{2}) + a_6(\fillup{2}) a_4(\fillup{3}) +
   a_6(\fillup{\emptyset}) a_4(\fillup{32})
   \;\;  = \;\;   a_6(31) a_4(2)
\]
\\
1. 
$a_6(\fillup{32}) = 0$ 
as the partition $(32)$ has size $5\ge 6-1$, there is no way to 
tile a hexagon with a pentagon and a quadrilateral, while 
$a_4(\fillup{\emptyset})=a_4(1^2)$. 
\\
2. 
$a_6(\fillup{3}) = a_6(31)$ --- we complete the tiling with a triangle.
\\
3. $a_4(\fillup{3}) = 0$ similar to 1., while $a_6(\fillup{2})=a_6(21^2)$. 
\\
4. $a_6(\fillup{\emptyset}) = a_6(1^4)$, again completing with triangles; 
while $a_4(\fillup{32}) = 0$ similar to 1.
\end{ex}


\begin{proof}[Proof of Lemma~\ref{lm:overcount}]
Observe that in each product 
$\prod_{i=1}^{\len{\nu}} a_{\nu_i +3}(\fillup{\gamma^i})$ 
each factor addresses one
 maximal triangulated region (for these regions their sizes, as polygons, are 
the $\nu_i+3$). 
We have to take the maximal subregions prescribed by $\nu$ 
(note that 
we can, and this does, ignore those that are already simple triangles)
and populate them according 
to the polygons available from $\mu$. 
The contribution to the overcount factor is $a_{\nu_i +3}(\fillup{\gamma^i})$ 
since this counts the number of
ways of tiling the subregion as indicated by 
$\gamma^i$
(after this, in our construction, recall from (\ref{pa:cake}), 
we simply triangulate to any one triangulation).  
Note that the sum over 
$ \gamma \modl{\nu} \mu $ 
constructively exhausts all possibilities. 
Indeed it does not check against internal details of $\nu$, so also
formally includes 
some impossibilities, in general --- the cases where 
$ |\gamma^i |  > \nu_i +1$ for some $i$ and  
the factor 
$ a_{\nu_i +3} (\gamma^i ) = a_{\nu_i +3} (\fillup{ \gamma^i } ) $
is zero.
(Since the extra terms are zero this does not affect the identity.)
\end{proof}


Our next step in proving (\ref{eq:column-sum}) is 
Lemma~\ref{lm:OF-is-prod-F}.
We will need a couple of preliminaries.

\ignore{{
\mdef Consider the composition $(\PP, \cup)$. 
Let $(\Z^\N,+)$ denote the group of integer-valued vectors of 
unbounded length but finite support
(the additive group of integer-valued polynomials). 
Let $\N_0^\N$ denote the submonoid of non-negative
valued vectors. 
Expressing $\PP$ in the exponential form we have 
$\PP \cong \N_0^\N$ and indeed 
$(\PP,\cup) \cong (\N_0^\N , +)$. 
}}

\ignore{{

\begin{lm} \label{lm:overcountx}
  Fix  $\nu \in \PP^*$ \ppm{I restricted here! omitted case is already
  done anyway!} and 
let $\star()$ denote an arbitrary \edible\ (\kub{additive?}) function
of $\PP^{\len\nu}$.
Then  
\[
\sum_{\mu \in\PP} \sum _{\gamma \modll{\nu} \mu} \star(\fillup{\gamma}^{{\nup}} ) 
 = \sum_{\delta \in \PP_{\geq \nu^+} } \star(\delta ) 
\]
\end{lm}

\ppm{
?}
\kub{too tired now to fully understand. do you mean the step after `Finally we need to show' 
is not correct and can't be true? - and quality control?!} \ppm{yes;checking}

\ppm{Can we say it as follows?:}

\newpage 
}}

\mdef
Note that for any $m$ and $\gamma \in \PP$ 
\beq \label{eq:key1}
\gamma \mapsto \fillup{\gamma^+}^m \mapsto \left(  \fillup{\gamma^+}^m \right)^- = \gamma
\eq
Meanwhile $\gamma \mapsto \gamma^-$ is defined and injective on
$\PP_r$ for $r>0$ 
by (\ref{eq:PiP}), and then
\beq \label{eq:keyi}
\gamma \mapsto \gamma^- \mapsto \fillup{\left( \gamma^- \right)^+}^{r}
= \gamma
\eq

\mlem{ \label{lem:rw1}
For $m\in\N$ the map $\gamma \mapsto \fillup{\gamma^+}^m$ is an injection
$\PP \rightarrow \PP_{\geq m}$;  
  and the image includes $\PP_m$.
}
\proof{ First note that $\fillup{\gamma^+}^m \in \PP_{\geq m}$ by
construction.
Then note that injectivity follows from (\ref{eq:key1}).
Finally the image property follows from (\ref{eq:keyi}).
\qed}

%
%

\medskip



\mdef
For $\nu\in\PP$ define $\PP_{\nu} = \times_i \PP_{\nu_i}$; and
$\PP_{\geq \nu} =   
 \{ \delta \in \PP^{\len\nu} \; | \; |\delta^i | \geq \nu_i \forall i \}  .  $  

Given $\nu\in\PP$ and a multipartition $\gamma \in \PP^{\len{\nu}}$  we define 
$\fillup{\gamma}^\nu = (\fillup{\gamma^1}^{\nu_1},\fillup{\gamma^2}^{\nu_2} , ...)$. 
Note that $\fillup{\gamma}^\nu \in \PP_{\geq \nu}$.


\ignore{{
\mdef
We write $\modll{\nu} \mu$ for the set 
$\{ \gamma \; | \; \gamma \modll{\nu}\mu \}$.
That is, $\modll{\nu} \mu$ is the set of all multipartitions $\gamma$ 
of length
$ \len\nu$ whose 
$\cup_{i=1}^{\len\nu} \gamma^i = \mu^+$
(Def.\ref{de:models}).
}}



\ignore{{
\ppm{Not needed any more!:
(1) The sets $\{ \modll{\nu} \mu \; | \; \mu\in\PP^{} \}$ 
are disjoint, that is   
$$
\bigsqcup_{\mu\in \PP} \modll{\nu} \mu   \; = \;
  \bigcup_{\mu\in \PP} \modll{\nu} \mu  
$$
and are a partition of 
$\left( \PP^{\len{\nu}} \right)^+ 
 = \left( \PP^+ \right)^{\len{\nu}}  $.
}
}}

\begin{lm} \label{lem:rw2}
  Fix  $\nu \in \PP^*$.
  The map $\gamma \mapsto \fillup{\gamma}^{\nu^+}$ is an injection
$\bigsqcup_{\mu\in \PP} \modll{\nu} \mu  \rightarrow \PP_{\geq \nu^+}$;
 and the image includes $\PP_{\nu^+}$.
\end{lm} 


\newcommand{\aaa}{\alpha}

\begin{proof}
We write the  maps from  \ref{lem:rw1} as
   $\gamma \mapsto \gamma^+ \mapsto \fillup{\gamma^+}^m$
  taking $\PP \rightarrow \PP^+ \stackrel{\aaa_m}{\rightarrow} \PP_{\geq m}$.
By (\ref{eq:ppp}) we have
$\bigsqcup_{\mu\in \PP} \modll{\nu} \mu \; = \; \left( \PP^+ \right)^{\len{\nu}}$.
Thus the map here is the Cartesian product
$\times_{i=1}^{\len{\nu}} \aaa_{\nu^+_i}$.
  Thus it is injective.
  Since the image of a Cartesian product is the product of images, the
  image bound of Lemma \ref{lem:rw1} also implies the image bound
  here.
%
%
\ignore{{
  Note 
from (\ref{de:models}) that 
\[
\sum_{\mu \in\PP} \sum _{\gamma \modll{\nu} \mu} \star(\fillup{\gamma}^\nup ) 
 =  \sum _{\gamma \in \bigsqcup_{\mu\in \PP} \modll{\nu} \mu} \star(\fillup{\gamma}^\nup ) 
\]

}}
\footnot{ \ppm{Not needed any more!}
  Define 
$
\fillup{\bigsqcup_{\mu\in \PP} \modll{\nu} \mu}^\nu
$
as the set obtained from  $\bigsqcup_{\mu\in \PP} \modll{\nu} \mu  $ 
by applying 
$\gamma \mapsto \fillup{\gamma}^{\nu} 
 = (\fillup{\gamma^1}^{\nu_1},\fillup{\gamma^2}^{\nu_2},...)$.
 Note that since,
for each $\mu$, 
 $\gamma \modll{\nu}\mu$ is built using $\mu^+$,
the $\fillup{\gamma}^\nu$'s are distinct since the $\gamma$'s are distinct
and only changed in the (empty) exponent of 1. 
Thus 
\ignore{{
\[  \sum _{\gamma \in \bigsqcup_{\mu\in \PP} \modll{\nu} \mu} \star(\fillup{\gamma}^\nup ) 
=\sum_{\delta\in \fillup{\bigsqcup_{\mu\in \PP} \modll{\nu} \mu}^\nup} \star({\delta} )  \]
}}
\[
  \bigsqcup_{\mu\in \PP} \modll{\nu} \mu
\; \stackrel{\gamma \mapsto \fillup{\gamma}^{\nup}}{\longrightarrow} \;
 \fillup{\bigsqcup_{\mu\in \PP} \modll{\nu} \mu}^\nup 
 \]
 is an injection and hence a bijection. 
\newcommand{\mus}{\sigma}
We need to show that 
$\fillup{\bigsqcup_{\mu\in \PP} \modll{\nu} \mu}^\nup  = \PP_{\geq \nu^+}$.
Firstly note that there is an inclusion left to right
\ppm{example? Now I think it is just ok by construction, yes?} \kub{have to think}. 
To see surjectivity consider an arbitrary element $\gamma$ of 
$  \PP_{\geq \nu^+}$. 
Since $\nu \neq 0$ then $\gamma\neq 0$ and we can take
$\gamma \mapsto \gamma^-$. 
Let $ \mus = \cup_i (\gamma^i )^-  = \cup_i (\gamma^- )^i$.
This is some partition, and hence 
we have that $\gamma \in \modll{\nu} \mus$.
\ppm{NOOOOOoooo.}

\ppm{examples: try $\nu=(21^2)$?... Then $\nu^+ = (32^2)$ and
  we can consider, say, $\gamma = ((21),(1^2),(7^2 1^x)) \in \PP_{\geq  \nu^+}$.
  This has
  $\gamma^- = ((1),0,(6^2))$ and $\mus = (6^2 1)$. Then $\mus^+ = (7^2 2)$ and
  $\modll{\nu} \mus = \{ ((7^2 2),0,0),...,((7),(7),(2)),...,((2),0,(7^2)),... \}$.
  Finally, computing $\fillup{\gamma}^{\nu^+}$ for these we
  find in particular $((21),(1^2),(7^2))$. Thus $\gamma$ occurs in the
  image iff $x=0$. 
  \\
  Next consider $\gamma = ((21),(1^2)(1^{14}))$,
  $\gamma^- = ((1),0,0)$, $\mus = (1)$. Then $\mus^+ = (2)$ 
  ...}
\ignore{{
  If $\nu=0$  then $\len{\nu} = 0$ and $\gamma = 0$ \ppm{HOW DEAL
  WITH THIS - seems like just a formal issue...?
}
\kub{if $\nu=0$, with $\len{\nu}=0$, what is $\PP^{0}$? how does 
and \edible\ function $\ast$ of $\PP^0$ look like?} 
\ppm{this is the first column of table 2.}
}}
}
\end{proof}

\ignore{{

%
\begin{lm}\label{lm:OF-terms}  
Let $\nu$ and $\mu$ be partitions, $s=\len{\nu}$. 
Assume that $\gamma\stackrel{s}{\models} \mu$. 
Then 
$a_{\nu_1+3}(\fillup{\gamma^1})a_{\nu_2+3}(\fillup{\gamma^2}) 
  \cdots a_{\nu_s+3}(\fillup{\gamma^s})$ 
is zero if $\exists\ i$ with 
$\fillup{\gamma^i}\not\leq (\nu_i +1)$. 

\noindent
Try again: 
\kub{ new formulation of 
statement, does 
not use dominance order} \ppm{the dom order version perhaps allows
  iff, which we dont need}\\
Then 
$a_{\nu_1+3}(\fillup{\gamma^1})a_{\nu_2+3}(\fillup{\gamma^2}) 
  \cdots a_{\nu_s+3}(\fillup{\gamma^s})$ 
is zero if $\exists\ i$ with 
\kub{$|\fillup{\gamma^i}|>\nu_i+1$} $\quad$ \kub{yes?} 
%

%
\end{lm}

\begin{proof}
\ignore{{
By Proposition~\ref{prop:overcount}, the summands of $\OF_{\mu,\nu}$ have the 
form as claimed in part (1) of the corollary. 
For part (1) 
it is thus enough to find $\mu$ such that the term 
$a_{\nu_1+3}(\fillup{\gamma^1})a_{\nu_2+3}(\fillup{\gamma^2}) 
\cdots a_{\nu_s+3}(\fillup{\gamma^s})$ is a summand 
of $\OF_{\mu,\nu}$.  
}}

By definition, 
$a_m(\delta)>0$ iff $\delta\le (m+1)$ 
where $\le$ is the dominance order on partitions. \\

\kub{and accordingly, proof should change:} \\
$a_m(\fillup{\gamma})>0$ if and only if $|\fillup{\gamma}|\le m+3$. 

\kub{check rank here}
%
\end{proof}

%
%


}}

\newpage

\begin{lm}\label{lm:OF-is-prod-F}
For every $\nu\in\PP$ we have 
\[
\sum_{\mu\in\PP}(-1)^{|\mu|}\OF_{\mu,\nu}= \prod_{i} F_{\nu_i}
\]
where
\beq \label{de:Fr}
F_r \; := \;  \sum_{m=0}^{r} (-1)^m \sum_{\rho\vdash m}   a_{r+3}(\fillup{\rho^+})
\eq
\end{lm}

\newcommand{\appone}{
\ppm{It might be good temporarily to have an example of \ref{lm:OF-is-prod-F}, so we can line
  up all the notations... Not sure if following is sufficiently generic!?...}
{Example. Case $\nu=(31)$:
$$
F_3 = +a_6(\fillup{\emptyset}) - a_6(\fillup{(1)^+})
+  a_6(\fillup{(2)^+}) +  a_6(\fillup{(1^2)^+})
-  a_6(\fillup{(3)^+}) -  a_6(\fillup{(21)^+}) -  a_6(\fillup{(1^3)^+})
$$
$$ \hspace{.351in}  =  +a_6(1^4) - a_6(\fillup{(2)})
   +  a_6(\fillup{(3)}) +  a_6(\fillup{(2^2)})
   -  a_6(\fillup{(4)}) -  a_6(\fillup{(32)}) -  a_6(\fillup{(2^3)})
$$
$$ \hspace{.351in}  =  +a_6(1^4) - a_6({(21^2)})
+  a_6({(31)}) +  a_6({(2^2)})
-  a_6({(4)}) - \underbrace{ a_6(\fillup{(32)}) -  a_6(\fillup{(2^3)})}_{=0}
$$
and
$$
F_1 \; = \;  +a_4(\fillup{\emptyset}) - a_4(\fillup{(1)^+})
    \; = \;  +a_4({\emptyset}) - a_4(\fillup{(2)})
    \; = \;\;  +a_4(1^2) - a_4({(2)})
$$
Meanwhile
\[
\sum_{\mu\in\PP} \pm  \OF_{\mu,(31)}  \;\;
  = \; \OF_{\emptyset,(31)} \;\; \pm \OF_{(1),(31)} \;\;
    \pm \OF_{(2),(31)} \pm \OF_{(1^2),(31)} \hspace{1.5in}
\] \[
\pm \OF_{(3),(31)} \pm \OF_{(21),(31)} \pm \OF_{(1^3),(31)}
\hspace{1in} \]
\[
 \pm \OF_{(4),(31)} \pm \OF_{(31),(31)} \pm \OF_{(22),(31)} \pm \OF_{(21^2),(31)} \pm \OF_{(1^4),(31)} \pm
    ...
\] \medskip
\[
= \;\; +a_6(1^4) a_4(1^2) \;\; - a_6(21^2) a_4(1^2) - a_6(1^4) a_4(2) \;\;\hspace{1cm}
\] \[
 + a_6(31) a_4(1^2) + \underbrace{ a_6(1^4) a_4(3) }_{=0}
 \]\[
 + a_6(22) a_4(1^2) + a_6(21^2) a_4(2)  + \underbrace{ a_6(1^4) a_4(22) }_{=0}  \;\;
 \]\[
 +a_6(4) a_4(1^2)  + \underbrace{ a_6(1^4) a_4(4) }_{=0}  \;\;
 + ...
\]
Here the injection is that a pair
$(\delta,\gamma) \in \PP_{\nu_1 +1} \times \PP_{\nu_2 +1}$
is taken to  ....same? Need to show that same lies in target?

Claim: the map 
$\PP_a \times \PP_b \rightarrow ???$ given by 
$(\delta,\gamma) \mapsto (\delta^-,\gamma^-)$
is well-defined and injective...

}

}

\begin{proof}
Note that $a_{r+3}(\rho)=0$ when $|\rho| > r+1$. Applying 
Lemma~\ref{lem:rw1} to (\ref{de:Fr}) then gives 
$$
F_r = \sum_{\delta\in\PP_{r+1}}  
            (-1)^{|\delta^-|} a_{r+3}(\delta) .
$$

\footnot{ \ppm{Not needed any more.}
\ignore{{
Let $\nu=(\nu_j)_j\in \PP$. 
We first 
show that both sides have the same non-zero terms up to sign. 
In the second step, we will 
verify the signs.
}}
\ignore{{

So consider 
$G_r:=\sum_{m=0}^{r}\sum_{\rho\vdash m}   a_{r+3}(\fillup{\rho^+})$ and 
compare 
$\sum_{\mu}\OF_{\mu,\nu}$ with $\prod_i G_{\nu_i}$. 


\noindent
\underline{Claim 1}: \\ 
$G_r=\sum_{\gamma\in\PP_{r+1}}a_{r+3}(\gamma)$. 

\noindent
\underline{Proof of Claim 1}: \\
}}
Observe first that 
every partition $\fillup{\rho^+}$ appearing in 
$F_r$ is an element of $\PP_{\ge (r+1)}$. 
Note that whenever $|\fillup{\rho^+}|>r +1$  
we have
$a_{r+3}(\fillup{\rho^+})=0$.
So 
$F_r$ only has non-zero summands $\pm a_{r+3}(\gamma)$ with $\gamma\in\PP_{r+1}$. 

We claim that the function $\rho \mapsto \fillup{\rho^+}$ is
injective on $\cup_{m=0}^{r} \PP_{m}$
(the `integration domain of (\ref{de:Fr})').
Given $\rho \neq \sigma$, then $\rho^+$ and $\sigma^+$ differ, and
differ in their parts of length greater than 1.
Thus $\fillup{\rho^+}$ and $\fillup{\sigma^+}$
also differ. Done. 
\ppm{....right?}

Since in  (\ref{de:Fr})  
$\rho$ runs over all 
partitions of all $m\ge 0$,
\ppm{really? maybe say it using the integration domain above?}
every partition $\gamma$ of $r+1$  appears 
once in the sum.
\ppm{I'm not sure I understand this argument.}
Thus
$$
F_r = \sum_{\delta\in\PP_{r+1}} \pm a_{r+3}(\delta) .
$$
}

\noindent 
Expanding 
$
\prod_{i=1}^s F_{\nu_i}
=(\sum_{\delta^1\in\PP_{\nu_1+1}} 
  (-1)^{|\delta^{1-}|} a_{\nu_1+3}(\delta^1)) 
\cdots (\sum_{\delta^s\in\PP_{\nu_s+1}}  
  (-1)^{|\delta^{s-}|} a_{\nu_s+3}(\delta^s))
$
we obtain
\beq \label{eq:Fexpand} 
\prod_{i=1}^s F_{\nu_i}
= 
\sum_{(\delta^1,\dots,\delta^s)\in\PP_{\nu_1+1} \times\dots\times \PP_{\nu_s+1}} 
  (-1)^{\sum_i |\delta^{i-}|} a_{\nu_1+3}(\delta^1)\cdots a_{\nu_s+3}(\delta^s) 
\eq
where $s=\len{\nu}$. 
But, noting the vanishing condition again,
applying Lemma~\ref{lem:rw2} to $\sum_\mu \pm \OF_{\mu,\nu}$
yields the same expression up to signs.
So it remains to check the signs. 
\footnot{ \ppm{Not needed any more!}
\ignore{{
\noindent
\underline{Proof of Claim 2}: \\
By Claim 1, 
$\prod_{i=1}^s G_{\nu_i}=(\sum_{\gamma^1\in\PP_{\nu_1+1}}a_{\nu_1+3}(\gamma^1)) 
\cdots (\sum_{\gamma^s\in\PP_{\nu_s+1}}a_{\nu_s+3}(\gamma^s))$ 
and the claim follows from expanding the latter. 
}}
\underline{Claim 3}: 
Each term on the right hand side of (\ref{eq:Fexpand}) appears as a term
in $\OF_{-,\nu}$ up to sign. 
\kub{sorry, I did change some of the $\gamma$'s into $\delta$'s. 
Hope, it is not too disturbing}

\noindent
Proof:
Take an arbitrary term 
$b= \pm a_{\nu_1+3}(\delta^1)a_{\nu_2+3}(\delta^2) 
   \cdots a_{\nu_s+3}(\delta^s)$ of $\prod_i F_{\nu_i}$
as in (\ref{eq:Fexpand}), thus indexed by
$(\delta^1,...,\delta^s) \in \times_i \PP_{\nu_i +1}$.
\ppm{We aim next to put this index set in injection with
  $\{ \gamma \; |\;  \gamma \models^s \mu  \}$ as defined in (\ref{de:models}).}
Define $\tilde{\mu}\in\PP$ to be the union of all 
the $\gamma^i$, and let $\mu$ be the partition with 
$m_t(\mu)=m_{t+1}(\tilde{\mu})$ for $t\ge 1$. Then 
$\pm b$ is a summand of $\OF_{\mu,\nu}$ by Lemma~\ref{lm:overcount}. 
%

\noindent
\underline{Claim 4}: 
Any remaining term of $\sum_{\mu}\OF_{\mu,\nu}$ is zero. 

\noindent
\underline{Proof of Claim 4}: 
Any remaining term of 
$\sum_{\mu}\OF_{\mu,\nu}$ 
is of the form 
$a_{\nu_1+3}(\delta^1)\cdots a_{\nu_s+3}(\delta^s)$ where 
there exists $i$ such that $\delta^i\not\le (\nu_{i}+1)$. 
By Lemma~\ref{lm:OF-terms}, this is $=0$. 

This finishes part one of the proof as we have shown that all the non-zero  
summands on the right hand side are summands of the left hand side, and 
that this reaches all the non-zero summands of the left hand side 
of $\sum_{\mu}\OF_{\mu,\nu}=\prod_i G_{\nu_i}$. 
%
%
}


\newcommand{\gammad}{\delta}

Let $b:= 
(-1)^{\sum_i |\delta^{i-}|} a_{\nu_1+3}(\gammad^1)\cdots a_{\nu_2+3}(\gammad^s)$ be a summand 
of $\prod_i F_{\nu_i}$.
\footnot{\ppm{Not needed any more?}
By definition, such a summand arises from partitions 
$\rho^1\vdash M_1,\dots, \rho^s \vdash M_s$ 
where $0\le M_i\le \ppm{\nu_i}$ for all $i$.
The sign of $b$ as a term of $\prod_i F_{\nu_i}$ 
is then $(-1)^{M_1+\dots + M_s}$.
Now $\rho^i$ is recovered from $\gammad^i$ by
\ppm{merging all the parts of $\gammad^i$???} and 
inverting the increase function $(-)^+$, i.e. $\rho^i$ is the partition with 
$m_j(\rho^i)=m_{j+1}(\gammad^i)$ for $j\ge 1$. 

With this, $M_i=\sum_{j\ge 1}m_j(\rho^i)=\sum_{j\ge 1}m_{j+1}(\gammad^i)$ 
\ppm{is this correct?}
(a finite sum) and so 
$M_1+\dots + M_s=\sum_{i=1}^s \sum_{j\ge 1} m_{j+1}(\gamma^i)$ 
$=\sum_{i=1}^s \sum_{j\ge 2} m_j(\gammad^i)$.  
}
%
Note 
that $b$ is a non-zero term of $\OF_{\mu,\nu}$ for 
$\mu = \left( \bigcup_i \gammad^i \right)^-$
(the partition with $m_j(\mu)=m_{j+1}(\bigcup_i \gammad^i)$ for $j\ge 1$),
but with sign $(-1)^{|{\mu}|}$. 
Now 
$|{\mu}|=\sum_{j\ge 1} jm_j(\mu) $ 
$ =\sum_{j\ge 1}jm_{j+1}(\coup_i \gammad^i)$. 
Let us compare this with $(-1)^{\sum_i |\delta^{i-}|}$.
We have
$$
|\delta^{i-}| =\sum_{j\geq 1} jm_j(\delta^{i-}) 
    = \sum_{j\geq 1} jm_{j+1}(\delta^{i})
$$
so 
the signs match up. 
\end{proof}


\begin{lm}\label{lm:EP-formula}
For each $r$, $F_{r}=1$. 
\end{lm}
\begin{proof}
Observe from (\ref{eq:triangles-case})  
that $F_r$ is the \EP\ characteristic.
  Now use Theorem~\ref{th:KisP} and Theorem~\ref{th:SP}.
\end{proof}

\begin{proof}[Proof of Theorem~\ref{thm:general2}]
Combining
Lemma~\ref{lm:EP-formula}
and
  Lemma~\ref{lm:OF-is-prod-F} 
gives 
(\ref{eq:column-sum}) as required. 
\end{proof}



\hspace{.3in}

\section{On combinatorial isometries and other discussion points}

This section takes the form of some extended remarks, on open
questions
and possible connections. 

Here we have drawn attention to the classification of cells of the
associahedron by  mechanisms such as  `\type s' of
tilings as defined in (\ref{de:sh}).  
%
One question thus raised  
is
how (literal) shapes of cells
in the associahedron 
are related to 
\type s of tilings. 






Consider the associahedron as a combinatorial complex (see for example \cite{Basak2010}). 
We call a self-map of the (combinatorial) complex
a combinatorial isometry. 
This should be distinguished from an isometry of one of the concrete
polytopal
realisations, since these depend on the realisation. A typical
realisation has {\em no} true isometries.
On the other hand at least some non-trivial self-maps of the complex
exist.
That is, the ones induced by rotating the polygon $P$.
%
We illustrate  by considering  
$K_5$ and $K_6$.

\subsection{Associahedron $K_5$} 

Consider
Figure~\ref{fig:rosetta}
(interpolating between the polytopal and tiling complex
realisations of $K_5$, 
by an extension of Brown's figure
in \cite{Brown09}).
\begin{figure}
\includegraphics[width=2.09in]{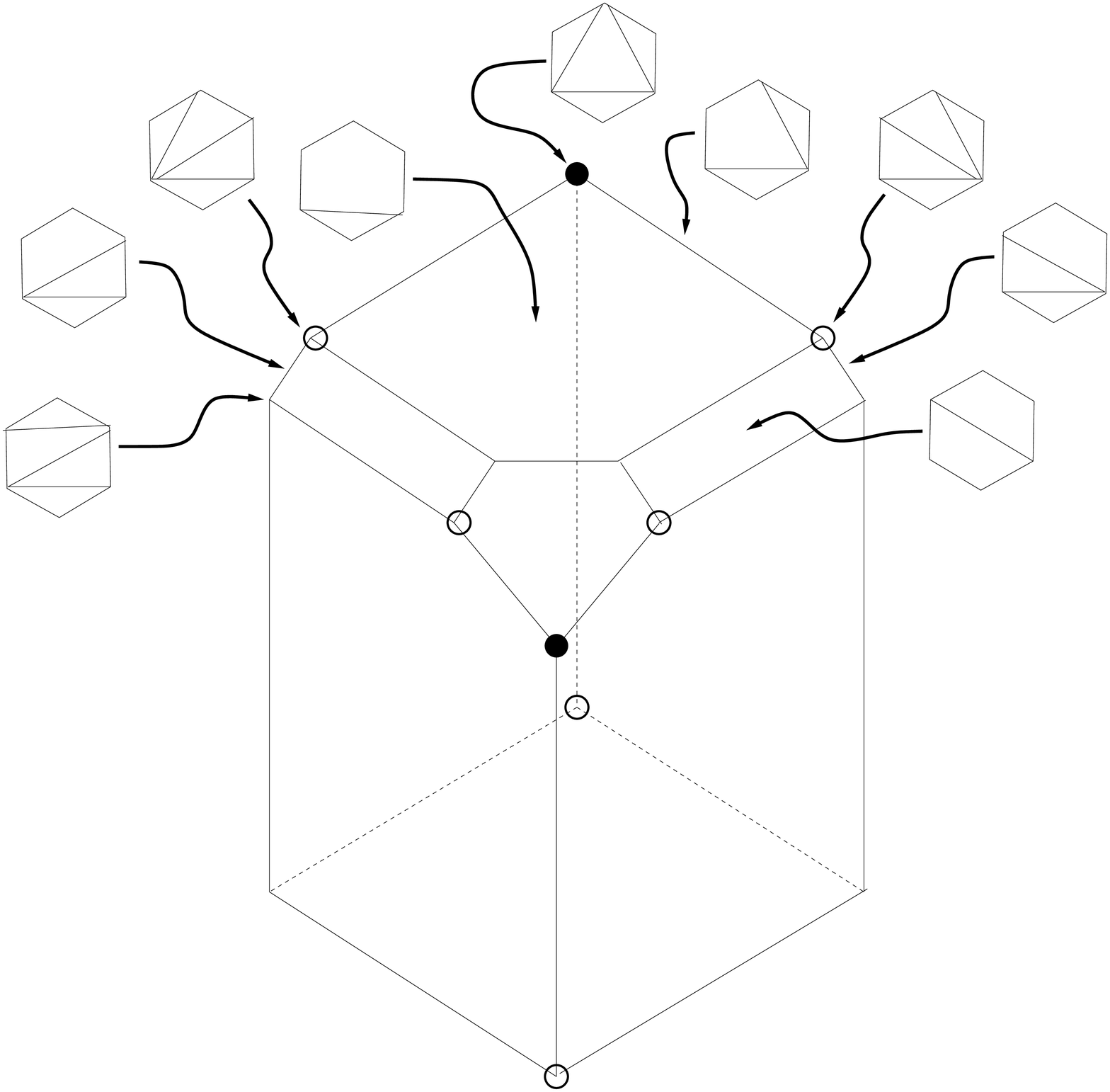}
\raisebox{.41in}{
$
\newcommand{\lin}{.2in} 
\begin{array}{c|cc|c|c}
  \dim = 3 &  2 && 1 & 0 \\ \hline
  \mu    =(3)  & (2) & (1^2) & (1) & (0) \\
  \lambda=(4)  & (31)& (22)& (21^2) & (1^4)  \\
  K_5  \; 
  & \; K_4 \times K_2\;  & \; K_3^2   \;
  & \; K_3 \times K_2^2 \;
  & K_2^4
\\
\includegraphics[width=\lin]{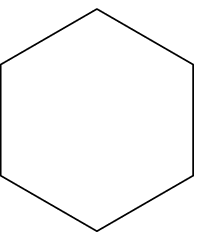} & 
\includegraphics[width=.2in]{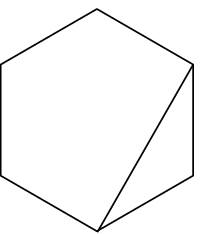} &
\includegraphics[width=.2in]{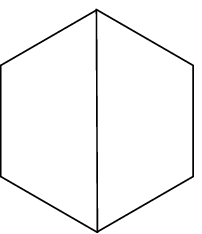} &
\includegraphics[width=.2in]{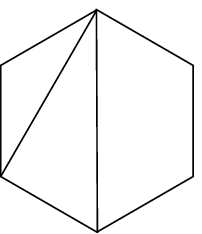} 
& \includegraphics[width=.2in]{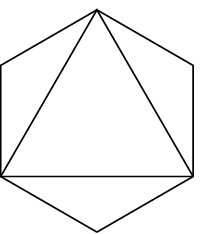} 
 \\
 & & & 
\includegraphics[width=.2in]{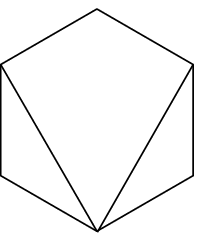}  &
 \includegraphics[width=.2in]{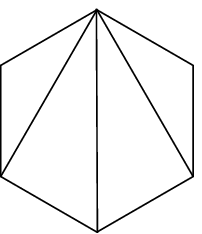}\\
 & & & 
\includegraphics[width=.2in]{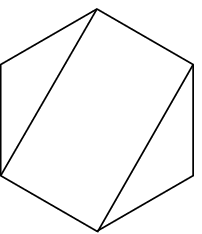} &
\includegraphics[width=.2in]{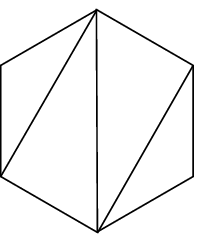}
\end{array}
$}
\caption{\label{fig:rosetta} Facets and shapes in $K_5$.}
\end{figure}
%
Here we see that there are various sub-classifications of $i$-cells
of $K_n$ available.
Firstly
there is the evident and well-known intrinsic classification by products of
smaller associahedra.
This essentially coincides with our tiling shape classification
via $\lambda \leadsto \times_i K_{\lambda_i^+} $.
This corresponds to ignoring
all triangles in tilings, since $K_2$ is trivial.
What about extrinsic properties?

%
Firstly consider the case of 0-cells.
In terms of their intrinsic physical shape these are obviously all the same ---
points. (Compare this with their equivalence under flip in the tiling
realisation.)
From another perspective, 0-cells
are `tristate'  points (points belonging to three cells).
They can be sorted into black circles (`tristate' 
points 
for 3 5-gonal faces); white circles (tristate points for two 5-gonal and
one 4-gonal face that are adjacent to $5^3$ tristate points);
the rest (tristate points of $5^2 4$ type that are not adjacent to
$5^3$ tristate points).
It will be clear from the figure that these classes coincide (in this case) with the classes
induced under rotation of $P$. 

What about 1-cells?
Again these all have the same shape. (They might have different
lengths in a concrete realisation, but this is not canonical.)
We leave it as an exercise to consider extrinsic properties. 


In preparation for looking briefly at $K_6$ (where our 4d sketching skills fail)
we include a tabulation of cells in  Figure~\ref{fig:rosetta} in a format that does
lift to $K_6$. 
The table is organised by dimension of cell.
The $\mu$  
label runs
through the integer partitions of the dimension
and $\lambda=\fillup{\mu^+}$
(let us generally exclude those $\mu$s that give an impossible $\lambda$). 
The final component gives representative tilings, up to 
polygon isometries. 

\ignore{{
\kub{thes following few lines: please, can you check what you want here and maybe explain? 
I can't see why it is obvious (`specifically if...')}
Some of these label cell types, specifically if the number of parts is
equal to codimension + 1.
Thus in particular $(1^3)$ is not possible in this rank.
On the other hand some of these labels correspond to multiple
isometry classes in
the tiling realisation, as indicated. 

\kb{rather say something like:} \\
note that $\mu=(2,1)$ and $\mu=(1^3)$ do not appear as the corresponding $\lambda$ 
do not have the right weight... 
}}

\subsection{Associahedron $K_6$} 

Here drawing a picture is hard. But we
have the tabulated form as follows. 
\ignore{{
describe the associahedron by organising the cells according to their structure 
as products of lower rank associahedra, as in the table for $K_5$. 
\kub{do you want the next bit here?} (and cf. our incidental tiling
classifications in our unpublished work). 
The classification via products is well-known. To consider it with the
tools to hand here, note that the tiling realisation of a cell
partitions $P$ into codimension +1 parts.
The possible further tiling of each part
then corresponds to the contribution of a lower associahedron to the
cell.
For example in codim=1 we have a single diagonal partitioning $P$
into two parts. Thus a corresponding cell is a product of two lower
hedrons (one of which may be `trivial', i.e. $K_2$). 
}}
\ignore{{
$
K_6  \; | \; K_5 \; , \; K_4 \times K_3 \; , \; K_3^3 \; | \; K_4 \; ,
\; K_3^2 \; | \; K_3 \; | K_2 
$
}}

$$
\newcommand{\lin}{.2in} 
\begin{array}{c|ccc|cc|c|c}
  \dim = 4 & 3 & & & 2 && 1 & 0 \\ \hline
  \mu    =(4) & (3) & (21) & (1^3) & (2) & (1^2) & (1) & (0) \\
  \lambda=(5) & (41)& (32) &       & (31^2)& (221)& (21^3) & (1^5)  \\
  K_6  \;
  & \; K_5\times K_2 \; , & \; K_4 \times K_3 \; , & \; K_3^3 \;
  & \; K_4 \times K_2^2\; , & \; K_3^2 \times K_2 \;
  & \; K_3 \times K_2^3 \;
  & K_2^5
\\
\includegraphics[width=\lin]{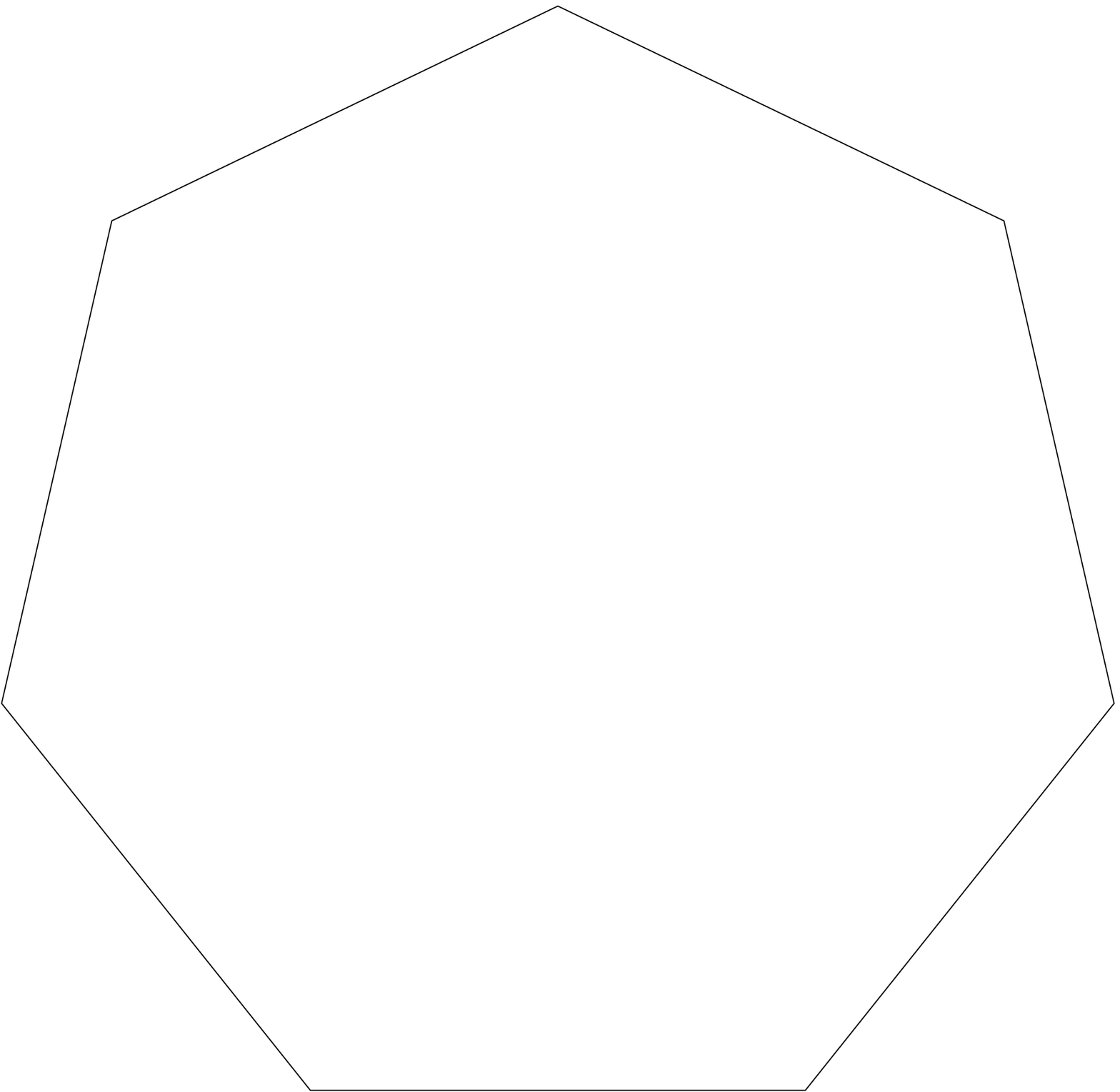} & 
\includegraphics[width=.2in]{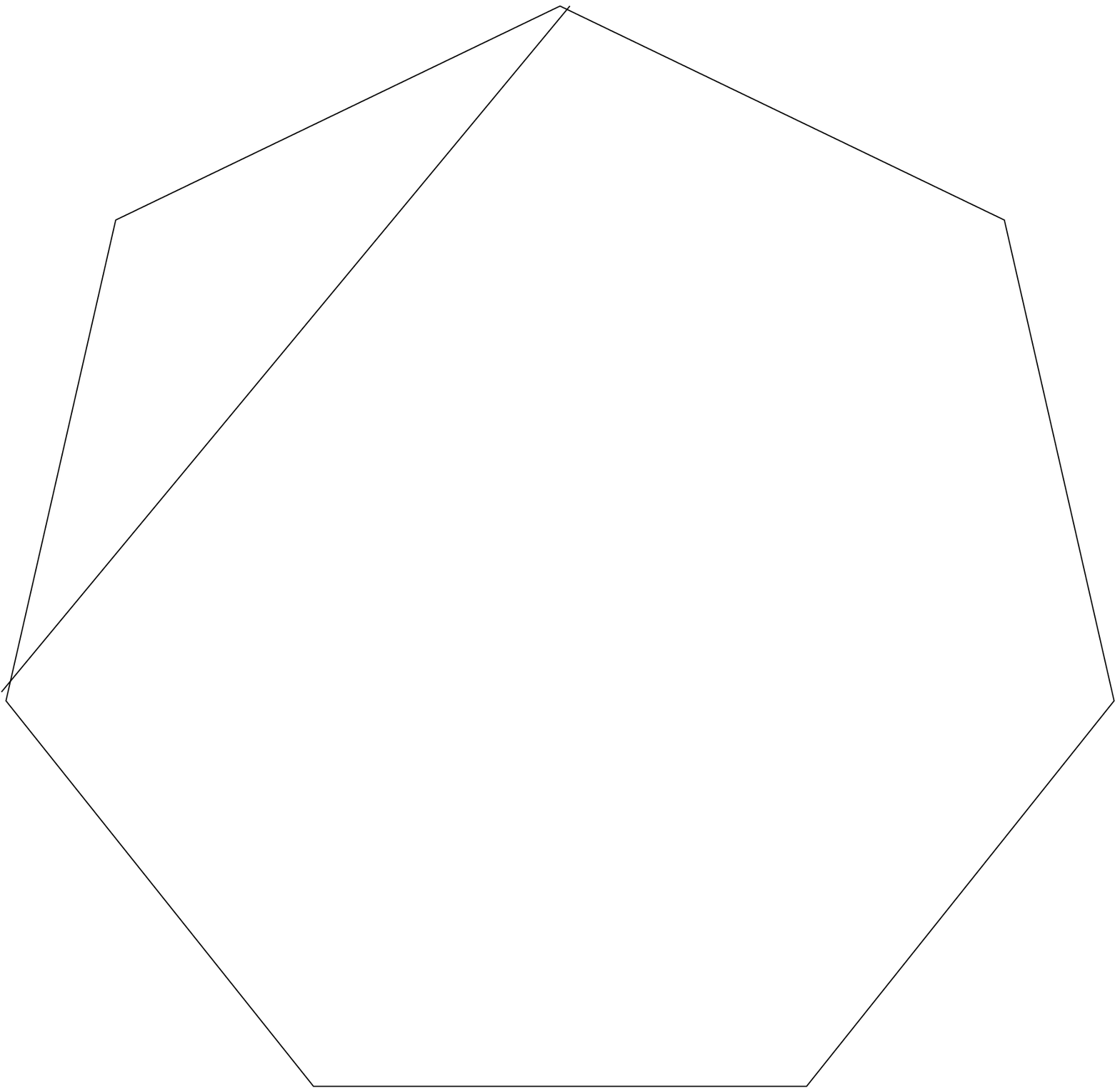} & 
\includegraphics[width=.2in]{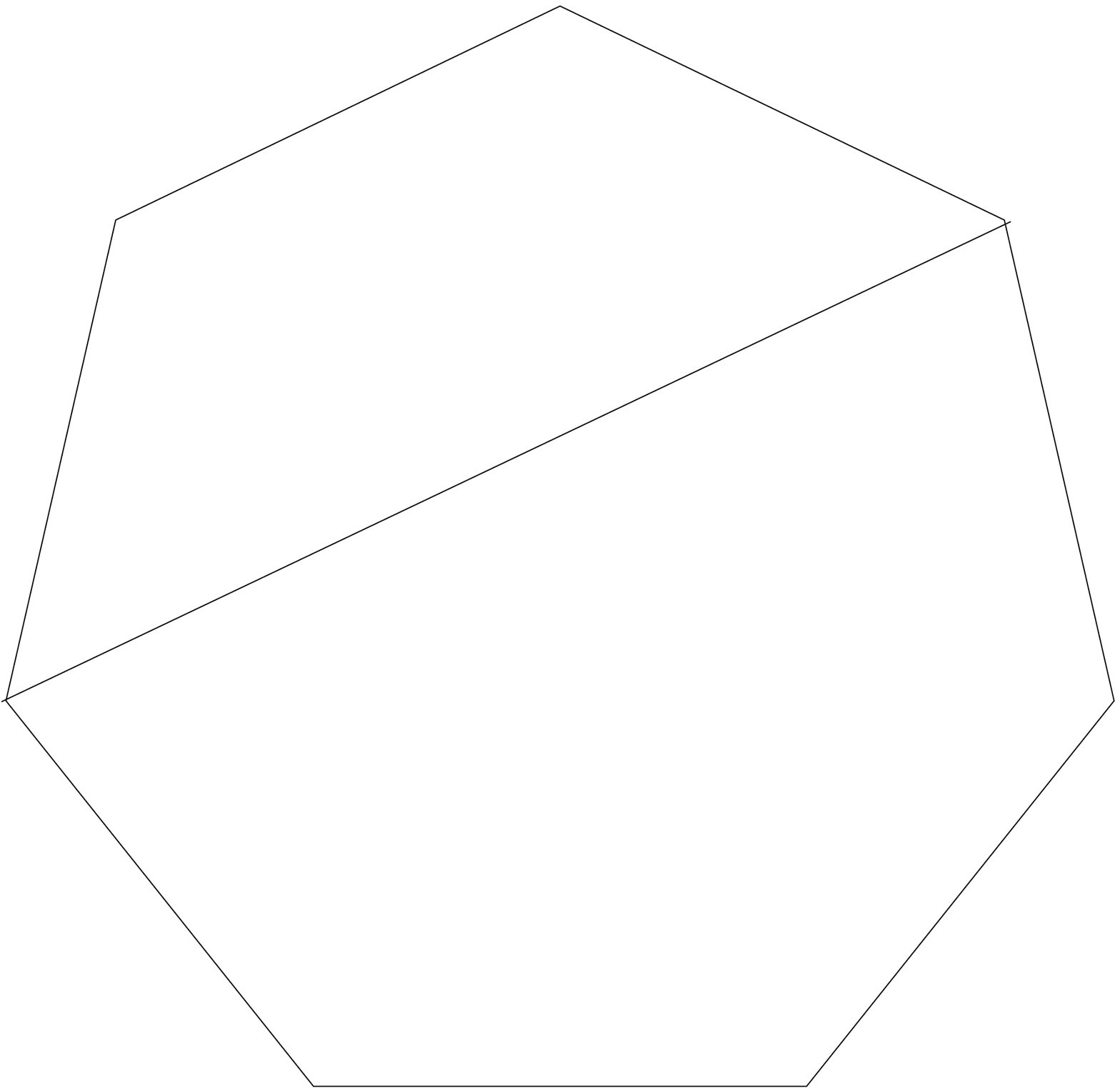} &
-&
\includegraphics[width=.2in]{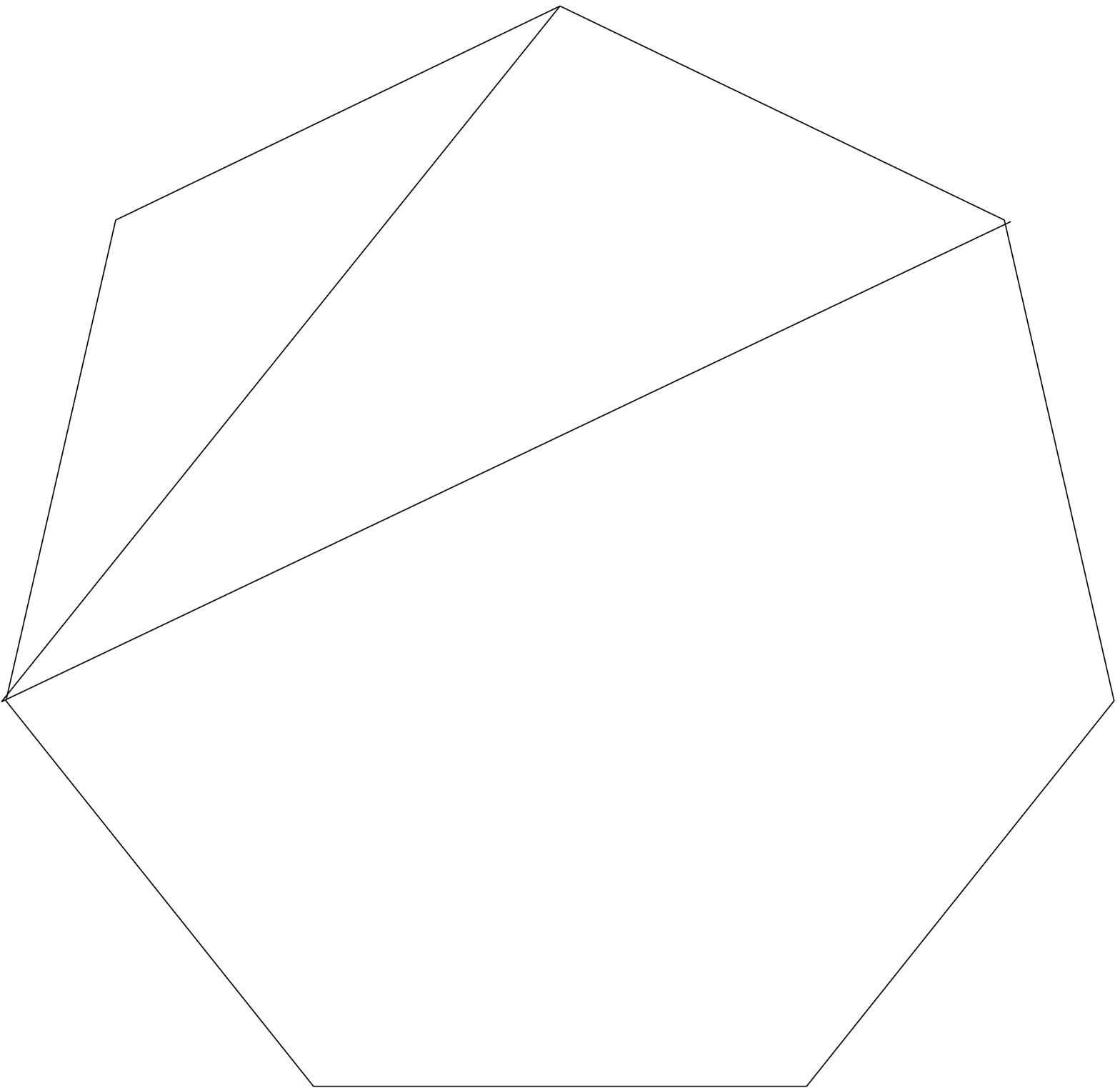} &
\includegraphics[width=.2in]{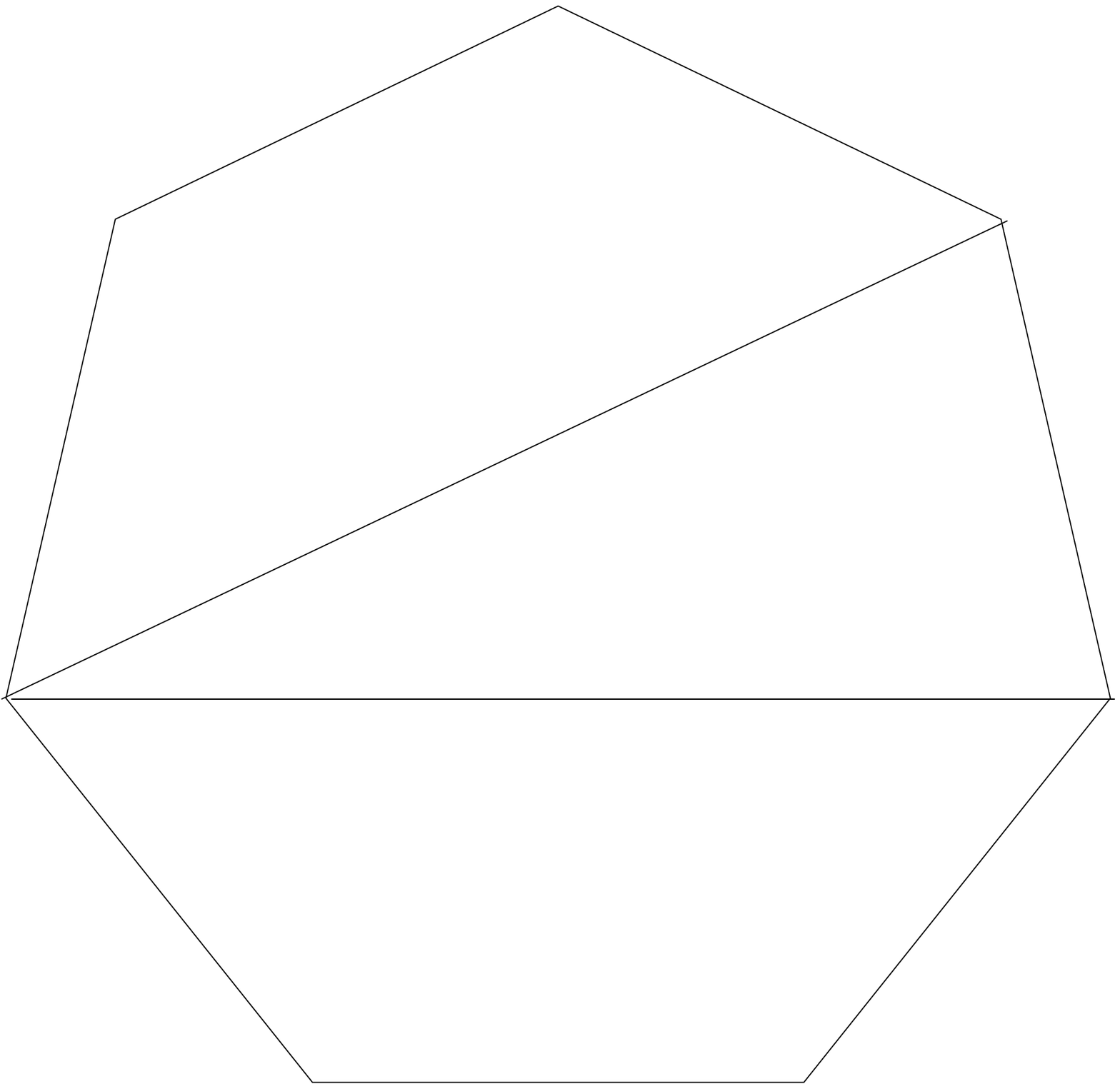} &
\includegraphics[width=.2in]{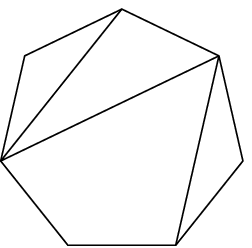}
& \includegraphics[width=.2in]{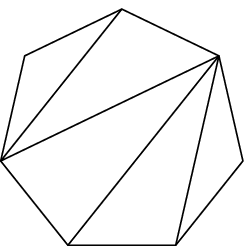} \\
& & & &
\includegraphics[width=.2in]{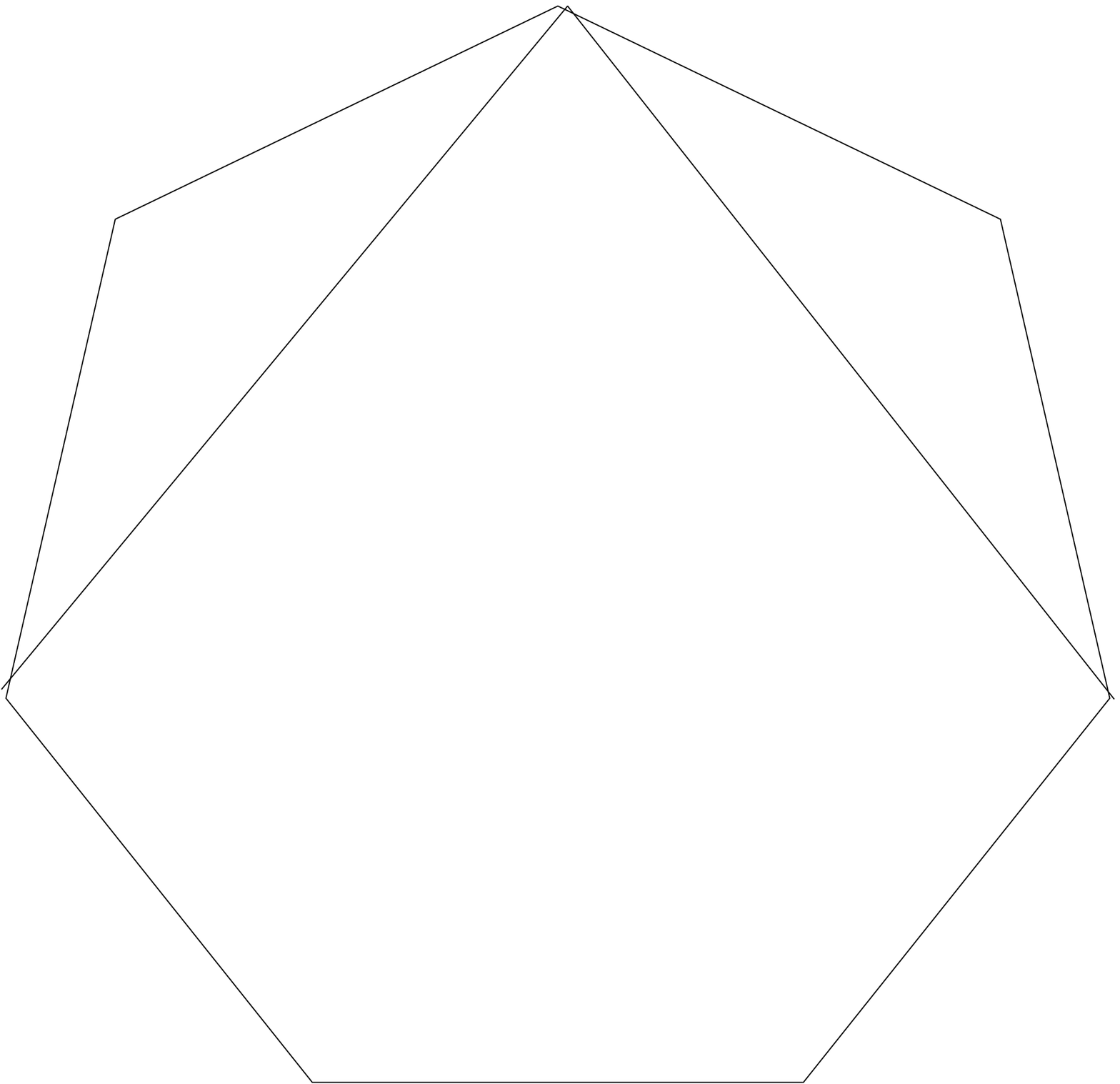} &
\includegraphics[width=.2in]{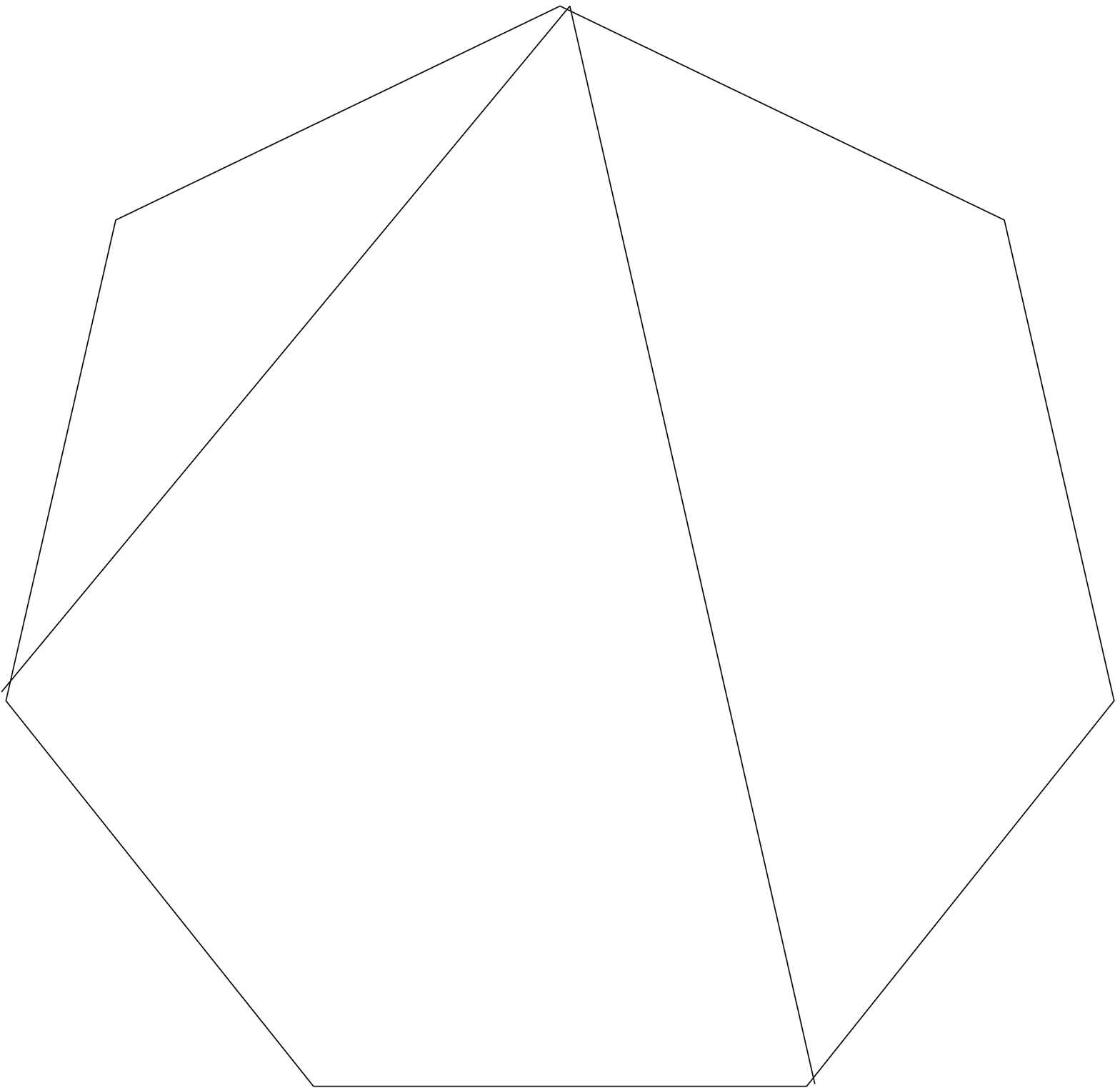} & \vdots  & 
\vdots \\ 
&&&& \includegraphics[width=.2in]{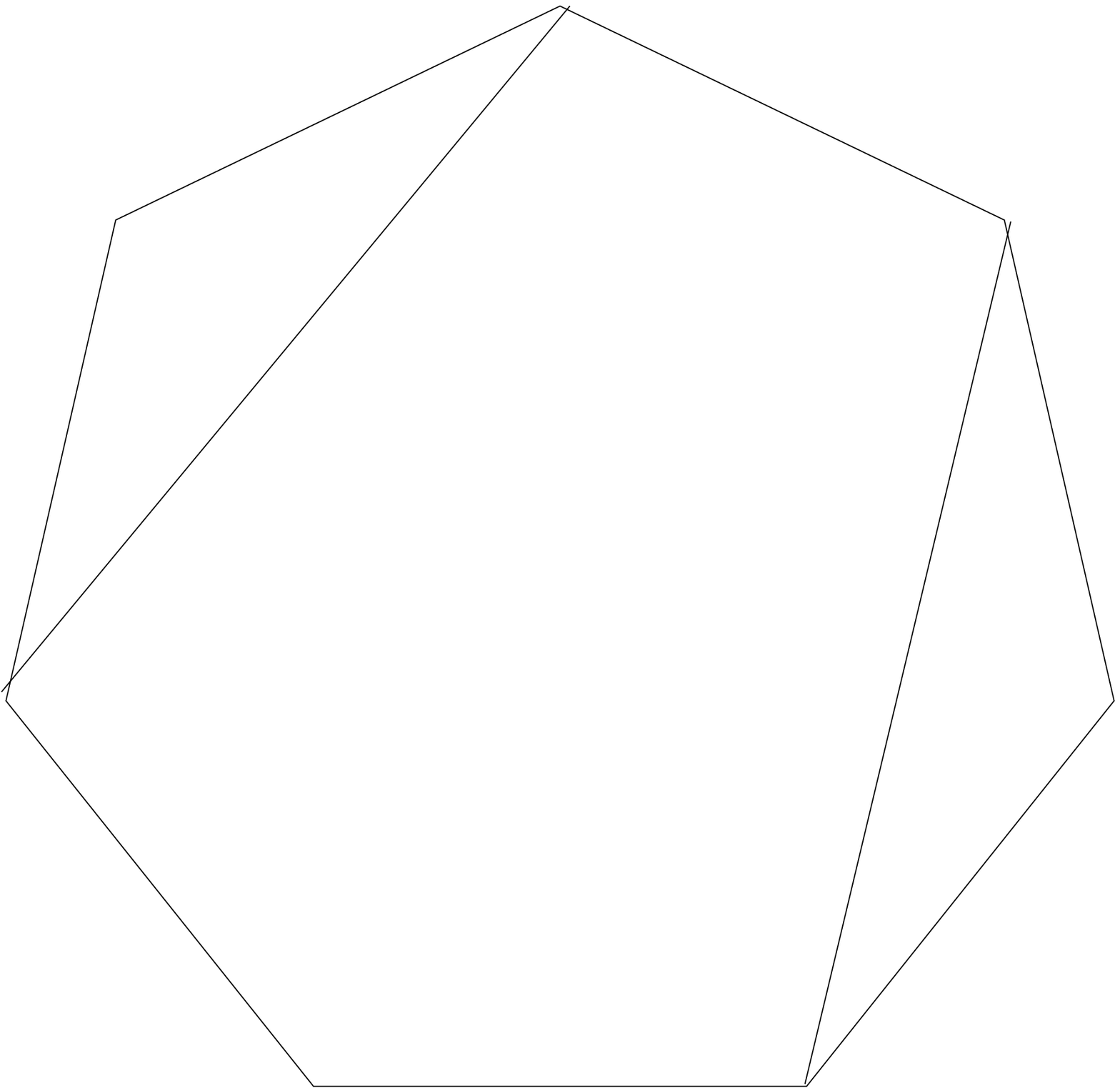} && 
\end{array}
$$


\noindent
NB although we include the column here, $\mu=(1^3)$ is not possible in
this rank, since $|\lambda| = | (2^3) |  = 6>n-1$.
On the other hand some $\mu$ labels again correspond to multiple
isometry classes in
the tiling realisation, as indicated.

\subsection{Brief remarks on connections with other areas} 

General connections of associahedra to several areas are already
mentioned in \S\ref{ss:intro}.
More specifically here, there are a number of areas of representation theory where the map
$\mu \mapsto \mu^-$ plays an interesting role.
See for example the partition algebra \cite{MartinSaleur94b,Jones94}
(and hence geometric complexity \cite{Mulmuley06});
and Gamba's formula \cite[CH.\,VI\,\S 4]{Boerner70}.
For connections to 
moduli spaces see e.g. \cite{DevORourke,Brown09}. 
For 
cyclic sieving see e.g. \cite{ppr}.
For
Baur et al.'s tiling/Temperley--Lieb correspondence see \cite{AABV2017}.
Finally here we mention that there are potential connections to 
quantum codes via 2d surface tilings (for a review see e.g. \cite{Breuckmann16}).


\subsection*{Acknowledgements}
KB thanks Max Glick and Lukas Andritsch for 
discussions. 
KB acknowledges support from FWF grants P30549-N26 and DK W1230. 
\\ 
PM thanks Robert Marsh and Joao Faria Martins for discussions; and 
acknowledges support from EPSRC grant EP/I038683/1 ``Algebraic, Geometric
and Physical underpinnings of Topological Quantum Computation''.

\appendix

\bibliographystyle{amsplain}
\bibliography{bm}

\end{document}